% interactapasample.tex
% v1.05 - August 2017

\documentclass[]{interact}

\usepackage{epstopdf}% To incorporate .eps illustrations using PDFLaTeX, etc.
\usepackage[caption=false]{subfig}% Support for small, `sub' figures and tables

\usepackage[longnamesfirst,sort]{natbib}% Citation support using natbib.sty
\bibpunct[, ]{(}{)}{;}{a}{,}{,}% Citation support using natbib.sty
% To set the list of references in 10 point font using natbib.sty

%\usepackage[natbibapa,nodoi]{apacite}% Citation support using apacite.sty. Commands using natbib.sty MUST be deactivated first!
%\setlength\bibhang{12pt}% To set the indentation in the list of references using apacite.sty. Commands using natbib.sty MUST be deactivated first!
%\renewcommand\bibliographytypesize{\fontsize{10}{12}\selectfont}% To set the list of references in 10 point font using apacite.sty. Commands using natbib.sty MUST be deactivated first!

\theoremstyle{plain}% Theorem-like structures provided by amsthm.sty
\newtheorem{theorem}{Theorem}[section]
\newtheorem{lemma}[theorem]{Lemma}

\theoremstyle{definition}
\newtheorem{definition}[theorem]{Definition}
\newtheorem{example}[theorem]{Example}

\theoremstyle{remark}

\begin{document}

%\articletype{ARTICLE TEMPLATE}% Specify the article type or omit as appropriate

\title{A solution for fractional PDE constrained optimization problems using reduced basis method}

\author{
\name{A. Rezazadeh\textsuperscript{a} and M. Mahmoudi\textsuperscript{a} and M. Darehmiraki\textsuperscript{b}\thanks{Email: darehmiraki@gmail.com}   }
\affil{\textsuperscript{a}Department of Mathematics, University of Qom, Qom 37161466711, Iran;\\ \textsuperscript{b}Department of Mathematics, Behbahan Khatam Alanbia University of Technology, Behbahan, Khouzestan, Iran}
}

\maketitle

\begin{abstract}
In this paper, we employ a reduced basis method for solving the PDE constrained optimization problem governed by a fractional parabolic equation with the fractional derivative in time from order $ \beta\in (0,1) $ is defined by Caputo fractional derivative. Here we use optimize-then-discretize method to solve it. In order to this, First, we extract the optimality conditions for the problem, and then solve them by reduced basis method.
To get a numerical technique, the time variable is discretized using a finite difference plan. In order to show the effectiveness and accuracy of this method, some test problem are considered, and it is shown that the obtained results are in very good agreement with exact solution.
\end{abstract}

\begin{keywords}
Optimal control, Reduced basis method, Fracional time derivative, Parametrized PDE .
\end{keywords}
\begin{amscode}
Primary 43A62; Secondary 42C15.
\end{amscode}

\section{Introduction}
Fractional calculus appear in many settings across engineering and science disciplines, it can be refered to \cite{Sun} for more information. Also, many real-life examples are modeled by PDE constarined optimization, such as the optimal control of tumor invasion, the optimal strategy of a thermal treatment in cancer therapy, and the medical image analysis \cite{Mang,Qui,Qui2}. Thanks to the increasing use of fractional derivative and fractional calculus in ordinary and partial differential equations and related problems, there is a interest for presenting efficient and reliable solutions for them.   \\
The reduced basis method is a model order reduction effective technique for approximating the solution of parameterized partial differential equations which have been used in the past few decades. Firstly, the RB method was proposed to solve the nonlinear analysis of structures in the late 1970s and has subsequently been further investigated and developed more broadly \cite{Balmes,Grepl}. The main idea of the RB method for parameter dependent PDEs is to approximate its solution by a linear combination of few global basis functions, obtained from a set of FE solutions corresponding to different parameter values. The RB method was repeatedly used to solve the PDE constrained optimization problem (PCOP). A reduced basis surrogate model was proposed for PCOP in \cite{r1}. Authors in \cite{r2} used RB method in conjunction with a trust region optimization framework to accelerate PCOP. RB method was applied to solve quadratic multi-objective optimal control problems governed by linear parameterized variational equations in \cite{r3}. Authors in \cite{r4} employed a locally adapted RB method for solving risk-averse PCOP.
\subsection{Literature review}
Many researchers pay attention to finding the numerical solution  for solving fractional PDE constrained optimization problem. %Antil and Otarola 
\cite{Antil}
 use finite element method (FEM) for an Optimal Control Problem of Fractional Powers of Elliptic Operators. In \cite{Ota} a piecewise linear FEM was proposed for an optimal control problem involving the fractional powers of a symmetric and uniformly elliptic second order operator. A spectral method for optimal control problems governed by the time fractional diffusion equation with control constraints was presented in \cite{Ye}. Authors in \cite{Bhrawy2} provided a space-time Legendre spectral tau method for the two-sided space-time Caputo fractional diffusion-wave equation. A new spectral collocation algorithm for solving time-space fractional partial differential equations with subdiffusion was reported in \cite{Bhrawy}. A hybrid meshless method was proposed for FODCP in \cite{Dareh}. Zacky and Mchado \cite{Zaky} provided a solution for FODCP by pseudo-spectral method.  As well as there exist variety of solutions for various fractional optimal control problems that the some of them can be seen in \cite{Salati,Bai}.\\

\subsection{The main aim of this paper}
Our main motivation in this paper is to apply reduced basis method for the rapid and reliable solution of fractional PDE constrained optimization problem. There exists, to the best of our knowledge, no previous work providing reduced order approximations for fractional PDE constrained optimization problem. It is known that the reduced basis method has been an efficient tool for computing numerical solutions of differential equations because of its high-order accuracy. We demonstrate the efficiency of the proposed schemes by considering several numerical examples.\\
The outline of this paper is as follows. After stating the exact form of the problem in Section 2, we present the weak form of the problem in Section 3. Necessary and sufficient conditions for the problem are introduced in section 4. Sections 5 review finite element method and reduced basis method. In section 6, we give some implementation details and present the numerical results to support the theoretical prediction. The error bound for proposed method is calculated in Section 7. Three numerical examples are presented in Section 8 to confirm our theoretical findings. In Section 9, we give some concluding remarks.
\section{The exact form of the problem}\label{se2}
This paper solves an optimal control problem with fractional PDE constraint. In this section, we present the parameterized optimal control problem. \\

Let $N\geqslant 1$ and $\Omega$ be a bounded open subset of $\mathbb{R}^N$ with boundary $\partial \Omega$. For a time $T>0$, we set $Q=\Omega \times (0,T)$ and $\Sigma=\partial \Omega \times (0,T) $. Given $\alpha \in (0,1)$,  and $D\subset \mathbb{R}^p$ be a $p$-dimensional compact set of parameters $\mu=(\mu_1,...\mu_p)$ with $p\geq 1$. Let $Y$ with $H^{1}_{0}(\Omega)\subset Y \subset H^1(\Omega)$ be a Hilbert space with inner product $(.,.)_Y$, $(v,w)=\int_{\Omega} vw d\Omega$
and associated norm $\|.\|_Y=\sqrt{(.,.)_Y}$. We assume that the norm $\|.\|_Y$ is equivalent to $H^1(\Omega)$-norm and denote the dual space of $Y$ by $Y^{'}$. We also introduce the control Hilbert space $U$ with its inner product $(.,.)_{U}$ and norm $\|.\|=\sqrt{(.,.)_{U}}$ with associated dual space $U^{'}$.
 We define the parameterized  fractional optimal control problem:
 \begin{align}\label{eq1.1}
\min \quad J(y,u)=\frac{1}{2}\int_{0}^{T}\int_{\Omega}&(y(.;\mu)-y_d)^2  d\Omega dt +\frac{\gamma}{2}\int_{0}^{T}\int_{\Omega}u^2(.;\mu) d\Omega dt, \nonumber\\
\text{subject to}\nonumber\\
 -\partial^{\alpha}_ty+\mu\Delta y&= u,\qquad \qquad \qquad\qquad\quad\text{ in $Q$},   \\
 y&=0, \quad \quad \quad \quad\quad \quad\quad\qquad\, \text{on $\Sigma$},\nonumber \\
y(.,0)&= 0, \quad \quad \quad \qquad\qquad \qquad \text{ in $\Omega$},\nonumber
\end{align}
where $J(y,u)$ is the cost functional, $\gamma >0$ is the so-called regularization parameter and $y_d$ is the desired state. Here the variables $y\in H^1(Q)$ and $ u\in L_2(Q)$ are named the state variable and the control variable, respectively. They are independent variables and should be determined.\\
The fractional derivative $\partial^{\alpha}_t$ for $\alpha\in (0,1)$ is the left-sided Caputo fractional derivative of order $\alpha$ with respect to $t$ and defined as:
\begin{align}\label{eq1.2}
\partial^{\alpha}_tf(t)=\frac{1}{\Gamma(1-\alpha)}\int_0^ t \frac{f^{'}(\tau)}{(t-\tau)^{\alpha}}d\tau,
\end{align}
where $\Gamma$ is the Gamma function. We consider $\partial_t$ for $\alpha=1$. The right-sided Caputo fractional is
\begin{align}\label{eq1.3}
\partial^{\alpha}_{T-t}f(t)=-\frac{1}{\Gamma(1-\alpha)}\int_t^ T \frac{f^{'}(\tau)}{(\tau -t)^{\alpha}}d\tau \quad\quad\quad \alpha\in (0,1),
\end{align}
where $f\in L^1(0,T)$.\\

\begin{definition}\label{de5.1}(Steeb, 1997; Steeb, 2006) Suppose $C=(c_{ij})_{m\times n}$ and $D$ are two arbitary matrices, then the matrix
\begin{align*}
C\otimes D=\begin{bmatrix}
c_{11}D& c_{12}D&\cdots & c_{1n}D\\c_{21}D&c_{22}D&\cdots &c_{2n}D\\
\vdots &\vdots \ddots &\vdots\\ c_{m1}D&c_{m2}D&\cdots &c_{mn}D
\end{bmatrix}
\end{align*}
is named Kronocker product of $C$ and $D$.
\end{definition}
\begin{definition}\label{de5.2} Suppose $C=(c_{ij})_{m\times n}$ be a given matrix, then $vec(C)$ is  a column vector  made of the row of $C$ stacked a top one another from left to right that the size of it is  $m\times n$.
\[vec(C)=(c_{11},c_{12},...,c_{1n},c_{21},c_{22},...,c_{m1},...,c_{mn})^T.\]
\end{definition}
\section{Weak form of the problem}\label{se3}
In this section, we consider the weak form of the problem \eqref{eq1.1}.
%\subsection{Exact Problem statement}
 %We consider the parameterized optimal control problem
 \begin{align}\label{eq2.1}
\min \quad J(y,u)=\frac{1}{2}\int_{0}^{T}\int_{\Omega}&(y(.;\mu)-y_d)^2  d\Omega dt +\frac{\gamma}{2}\int_{0}^{T}\int_{\Omega}u^2(.;\mu) d\Omega dt, \nonumber\\
&\text{S.t}\nonumber\\
 -\partial^{\alpha}_t(y(t),v)-a(y(t),v;\mu)&=(u(t),v),\qquad \qquad\quad  v\in Y,   \\
 y&=0, \qquad \qquad\qquad \qquad \text{on $\partial \Omega$},\nonumber \\
 y(.,0)&= 0,  \qquad \qquad\qquad\qquad\, \text{ in $\Omega$}.\nonumber
\end{align}
 We introduce the parameter-dependent bilinear form $a(.,.;\mu):Y\times Y\rightarrow \mathbb{R}$ as
  \[a(v,w;\mu)=\mu\int_{\Omega}\nabla v\nabla w d\Omega.\] It is assumed that $a(.,.;\mu)$ is continious
\[0\leq \gamma_a(\mu)=sup_{w\in Y/\{0\}}sup_{v\in Y/\{0\}} \frac{a(w,v;\mu)}{\|w\|_Y \|v\|_Y}\leq \gamma^a_0< \infty,    \quad\quad \forall \mu\in D,\]
and coercive
\begin{align}\label{eq2.2}
\alpha(\mu)=inf_{v\in Y/\{0\}}\frac{a(v,v;\mu)}{\|v\|^2_Y}\geq \alpha_0>0, \quad\quad\quad\quad\quad\quad\quad\quad\quad\quad \forall \mu\in D.
\end{align}
The bilinear form is assumend to be dependent affinely on the parameter.
\section{Optimality condition}\label{se4}
Now, the first order necessary and sufficient optimality conditions for the fractional optimal control problem \eqref{eq2.1} are derived using the Lagrangian approach. Let $p$ denote the adjoint variable, $\mu\in D$ is given, we define the Lagrangian functional related to Problem \eqref{eq2.1} as
\begin{align*}
L(y,u,p)=J(y,u)-\int_Q(-\partial^{\alpha}_t(y(t),v)-a(y(t),v;\mu)-(u(t),v))pdx dt.
\end{align*}
By derivative of $L$ with respect to $p$, the state equation is as following:
\begin{align}\label{eq2.2}
\begin{cases}
&-\partial_{t}^{\alpha}(y^*(t),\phi)-a(y^*(t),\phi;\mu)=(u^*,\phi), \quad \quad\quad  \phi \in Y,\\
&y^*(.,t)=0, \qquad \qquad\qquad\qquad\qquad\qquad\qquad\quad\text{on $\partial \Omega $},\\
 &y^*(.,0)=0  \qquad \qquad\qquad\qquad\qquad\qquad\qquad\quad\,\,\text{in $\Omega $}.
 \end{cases}
 \end{align}
By differentiating of $L$ with respect to $y$,  the adjoint equations are obtained
\begin{align}\label{eq2.3}
 \begin{cases}
&-\partial_{T-t}^{\alpha}(p^*(t),\varphi) -a(p^*(t),\varphi;\mu)=(y_d(t)-y^*,\varphi), \quad  \varphi \in Y,\\
&p^*(.,t)=0,  \qquad \qquad\qquad\qquad\qquad\qquad\qquad\qquad\quad\,\text{on $\partial \Omega $},\\
&p^*(.,T)=0, \qquad \qquad\qquad\qquad\qquad\qquad\qquad\quad\,\,\,\,\,\,\quad\text{in $\Omega $}.
\end{cases}
 \end{align}
 Finally, by derivative with respect to $u$, the gradient equation is obtained
 \begin{align}\label{eq2.4}
\gamma u^*-p^*=0,\quad\quad\quad\quad \text{in $Q$}.
\end{align}
We replace $u^*=\frac{1}{\gamma}p^*$ in equation \eqref{eq2.2}. Since the fractional state equation is left-sided Caputo fractional derivative and the fractional adjoint equation is right-sided, we apply change of variable $p(x,t)=\bar{p}(x,T-t)$ and then we have
\[\partial_{T-t}^{\alpha}p(x,t)=\partial_{t}^{\alpha}\bar{p}(x,T-t),\] for more detail, see \citep{Antil}. As a result, the right Caputo fractional derivative can be written as a left Caputo fractional derivative. Therefore, the equation \eqref{eq2.3}, \eqref{eq2.4} change to
\begin{align}\label{eq2.5}
 \begin{cases}
 &-\partial_{t}^{\alpha}(y^*(t),\phi)-a(y^*(t),\phi;\mu)=\frac{1}{\gamma}(\bar{p}^*(T-t),\phi), \quad \quad\quad \quad\quad\,\, \phi \in Y,\\
&-\partial_{t}^{\alpha}(\bar{p}^*(T-t),\varphi) -a(\bar{p}^*(T-t),\varphi;\mu)=(y_d-y^*,\varphi),  \quad\quad\quad \varphi \in Y,\\
&y^*(.,t)=0, \quad \bar{p}^*(.,T-t)=0, \qquad\qquad\qquad\qquad\qquad \qquad\quad\text{on $\partial \Omega $},\\
&y^*(.,0)=0, \quad \bar{p}^*(.,0)=0, \qquad \qquad\qquad \qquad\qquad\qquad \qquad\quad\text{in $\Omega. $}
\end{cases}
 \end{align}
For simplicity, we hereinafter eliminate the star icon in the upper index.
 %For more details, see \citep{}.
 \section{Finite element method}\label{se5}
 In this section, finite element method is used to discretize the problem. We divide the time interval $[0,T]$ into $K$ sub-interval of equal length $\Delta t=\frac{T}{K}$ and $t^k=k\Delta t, 0\leq k\leq K$ and  $\mathbb{K}=\{0,1,..., K\}$. We introduce the finite element spaces $Y_h\subseteq Y$ and $P_h\subseteq P$ of large dimension $N_h$. Then the following equations are gained
 \begin{align}\label{eq2.5}
 \begin{cases}
 &-\partial_{t}^{\alpha}(y(t_k),\phi)-a(y(t_k),\phi;\mu)=\frac{1}{\gamma}(\bar{p}(T-t_k),\phi), \quad \quad\quad \quad\quad \quad\quad\phi \in Y_h, k\in \frac{ \mathbb{K}}{\{0\}},\\
&-\partial_{t}^{\alpha}(\bar{p}(T-t_k),\varphi) -a(\bar{p}(T-t_k),\varphi;\mu)=(y_d(t_k)-y(t_k),\varphi),  \quad\quad \varphi \in Y_h, k\in \frac{ \mathbb{K}}{\{K\}},\\
&y(.,t_k)=0, \quad \bar{p}(.,T-t_k)=0, \quad\quad \quad\quad\quad\qquad \qquad\qquad\qquad\qquad\text{on $\partial \Omega $},\\
&y(.,0)=0, \quad \bar{p}(.,T-t_{K})=0, \quad \quad\quad\quad\quad \qquad\qquad\qquad\qquad\qquad\text{in $\Omega $}.
\end{cases}
 \end{align}
 Then, in algebraic formulation we have,
 \begin{align}\label{eq2.5}
 \begin{cases}
 &- M\partial_{t}^{\alpha}y(t_k)-\mu Ay(t_k)-\frac{1}{\gamma}B\bar{p}(T-t_k)=0, \quad \quad\quad\quad\quad\quad \quad\quad k\in \frac{ \mathbb{K}}{\{0\}},\\
&-M^{'}\partial_{t}^{\alpha}(\bar{p}(T-t_k) -\mu A^{'}\bar{p}(T-t_k)+B^{'}y(t_k)=Y_d(t_k), \quad \quad\quad  k\in \frac{ \mathbb{K}}{\{K\}},\\
&y(.,t_k)=0, \quad \bar{p}(.,T-t_k)=0, \quad\quad \quad\quad\quad\qquad \qquad\qquad\qquad\quad\text{on $\partial \Omega $},\\
&y(.,0)=0, \quad \bar{p}(.,T-t_K)=0, \quad\quad \quad\quad\quad\qquad \qquad\qquad\qquad\quad\,\,\,\text{in $\Omega $},
\end{cases}
\end{align}
where
\begin{align*}
&y_h(t_k)=\sum_{i=1}^{N_h}y_{hi}(t_k)\phi_i ,\quad\quad\quad\quad\quad \bar{p}_h(T-t_k)=\sum_{i=1}^{N_h}\bar{p}_{hi}(T-t_k)\varphi_i ,
% &y^{*}(t_h=[y^{*k}_{h1},...,y^{*k}_{hN_h}]^T, \quad\quad\quad\quad\quad p^{*k}_h=[p^{*k}_{h1},...,p^{*k}_{hN_h}]^T.
\end{align*}
$\phi_i$ and $\varphi_i$ are the basis of finite element method.
The matrices used in the  state and adjoint equations are
\begin{align*}
&(M)_{ij}=(\phi_i,\phi_j),\quad\quad\quad\quad\quad\quad\quad (B)_{ij}=(\varphi_i,\phi_j),\\
&(M^{'})_{ij}=(\varphi_i,\varphi_j), \quad\quad\quad\quad\quad\quad\quad (B^{'})_{ij}=(\phi_i,\varphi_j),\\& Y_d(t_k)=(y_d(t_k),\varphi_j),
\end{align*}
where $M, M^{'}, B$ and $B^{'}$ are the mass matrix of the finite element method. Also, we have
\[(A)_{ij}=a(\phi_i,\phi_j),\quad\quad (A^{'})_{ij}=a(\varphi_i,\varphi_j),\quad\quad\quad 1\leq i,j\leq N_h\]
where they are the same and are called the stiffness matrix of finite element method.\\
Now, we express the following lemma for discretization of time fractional derivative.
\begin{lemma}\label{Le5.1}\cite{Mohebbi}.
Suppose $0\leq\alpha \leq 1$ and $g(t)\in C^2[0,t_k]$, it holds that
\begin{align*}
|\frac{1}{\Gamma(1-\alpha)}\int_0^{t_n}\frac{g^{'}(t)}{(t_n-t)^{\alpha}}dt-c[b_0g(t_n)-\sum_{m=1}^{n-1}(b_{n-m-1}-b_{n-m})g(t_m)-b_{n-1}g(t_0)]|\leq\\
\frac{1}{\Gamma(2-\alpha)}[\frac{1-\alpha}{12}+\frac{2^{2-\alpha}}{2-\alpha}-(1+2^{-\alpha})]\max_{0\leq t\leq t_k}|g^{"}(t_n)|\tau^{2-\alpha},
\end{align*}
where \[b_m=(m+1)^{1-\alpha}-m^{1-\alpha} , \quad \quad c=\frac{\tau^{-\alpha}}{\Gamma(2-\alpha)},\quad\quad \tau=\Delta t .\]
\end{lemma}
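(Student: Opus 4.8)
This is the standard truncation estimate for the $L1$ approximation of the Caputo derivative, so the plan is first to identify the bracketed expression as that approximation, and then to bound the quadrature error it incurs. I would begin by splitting the defining integral over the mesh,
\[
\frac{1}{\Gamma(1-\alpha)}\int_0^{t_n}\frac{g'(t)}{(t_n-t)^{\alpha}}\,dt
=\frac{1}{\Gamma(1-\alpha)}\sum_{m=0}^{n-1}\int_{t_m}^{t_{m+1}}\frac{g'(t)}{(t_n-t)^{\alpha}}\,dt,
\]
and on each subinterval replacing $g'(t)$ by the slope $\frac{g(t_{m+1})-g(t_m)}{\tau}$ of the piecewise-linear interpolant. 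The singular kernel then integrates in closed form, $\int_{t_m}^{t_{m+1}}(t_n-t)^{-\alpha}\,dt=\frac{\tau^{1-\alpha}}{1-\alpha}\,b_{n-m-1}$, and since $(1-\alpha)\Gamma(1-\alpha)=\Gamma(2-\alpha)$ the resulting quadrature equals $c\sum_{m=0}^{n-1}b_{n-m-1}\big(g(t_{m+1})-g(t_m)\big)$. A reindexing together with an Abel summation then collects the two boundary contributions and rewrites this sum as exactly $c\big[b_0g(t_n)-\sum_{m=1}^{n-1}(b_{n-m-1}-b_{n-m})g(t_m)-b_{n-1}g(t_0)\big]$. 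This algebraic identification is routine.

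Next I would write the truncation error as the sum of local quadrature errors,
\[
r^{n}=\frac{1}{\Gamma(1-\alpha)}\sum_{m=0}^{n-1}\int_{t_m}^{t_{m+1}}\Big(g'(t)-\frac{g(t_{m+1})-g(t_m)}{\tau}\Big)\frac{dt}{(t_n-t)^{\alpha}},
\]
and estimate each integrand with Taylor's theorem. Expanding $g(t_{m+1})$ and $g(t_m)$ about $t$ gives
\[
\Big|g'(t)-\frac{g(t_{m+1})-g(t_m)}{\tau}\Big|\le\frac{\max_{0\le s\le t_k}|g''(s)|}{2\tau}\big[(t_{m+1}-t)^2+(t-t_m)^2\big],
\]
so that $|r^n|$ is controlled by a weighted sum of the elementary singular integrals $\int_{t_m}^{t_{m+1}}\frac{(t_{m+1}-t)^2+(t-t_m)^2}{(t_n-t)^{\alpha}}\,dt$, all of which can be evaluated explicitly via the substitution $v=t_n-t$.

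The decisive step is to separate the terminal interval(s) abutting the singularity at $t=t_n$ from the regular ones and to sum the two contributions sharply. On the regular intervals the weight $(t_n-t)^{-\alpha}$ is smooth, and after an integration by parts (the boundary terms vanish because the interpolation remainder $\rho_m=g-\Pi_m g$ vanishes at both nodes) the bulk contribution is governed by the exact mean value $\int_{t_m}^{t_{m+1}}\rho_m\,dt=-\frac{\tau^3}{12}g''(\xi_m)$; comparing the ensuing sum of weight increments with $\int_0^{t_{n-1}}\partial_t(t_n-t)^{-\alpha}\,dt$ and using $\frac{1}{\Gamma(1-\alpha)}=\frac{1-\alpha}{\Gamma(2-\alpha)}$ produces the coefficient $\frac{1-\alpha}{12}$. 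The interval(s) nearest $t_n$, where the kernel is genuinely singular and the smooth-weight reasoning fails, are instead bounded by integrating the explicit singular weights computed above; these are what generate the remaining terms $\frac{2^{2-\alpha}}{2-\alpha}-(1+2^{-\alpha})$. Collecting both pieces and factoring out $\frac{\tau^{2-\alpha}}{\Gamma(2-\alpha)}\max_{0\le s\le t_k}|g''(s)|$ yields the stated bound.

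I expect the bookkeeping of the exact constant to be the main obstacle. The singular contributions near $t=t_n$ must be computed exactly rather than merely estimated by their order, and the sum of weight increments over the $O(n)$ regular intervals has to be controlled uniformly in $n$ so that the full $\tau^{2-\alpha}$ rate survives; this is where the monotonicity and convexity of the sequence $b_m$ (equivalently of $(t_n-t)^{-\alpha}$ and of its derivative) is used to give the increments a fixed sign and to collapse them to the clean endpoint terms appearing in the final constant.
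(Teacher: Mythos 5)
The paper does not prove this lemma at all: it is imported verbatim (with a typo --- the right-hand side should carry $\max_{0\le t\le t_n}|g''(t)|$, not $|g''(t_n)|$) from \cite{Mohebbi}, and ultimately goes back to Sun and Wu's truncation estimate for the $L1$ discretization of the Caputo derivative. Your sketch reconstructs exactly that standard argument, and its verifiable steps check out: the closed form $\int_{t_m}^{t_{m+1}}(t_n-t)^{-\alpha}dt=\frac{\tau^{1-\alpha}}{1-\alpha}b_{n-m-1}$, the identity $(1-\alpha)\Gamma(1-\alpha)=\Gamma(2-\alpha)$, the Abel summation producing $c\bigl[b_0g(t_n)-\sum_{m=1}^{n-1}(b_{n-m-1}-b_{n-m})g(t_m)-b_{n-1}g(t_0)\bigr]$, and the Taylor bound $\bigl|g'(t)-\frac{g(t_{m+1})-g(t_m)}{\tau}\bigr|\le\frac{\max|g''|}{2\tau}\bigl[(t_{m+1}-t)^2+(t-t_m)^2\bigr]$ are all correct, as is tracing $\frac{1-\alpha}{12}$ to the signed interpolation remainder $\int_{t_m}^{t_{m+1}}(g-\Pi_m g)\,dt=-\frac{\tau^3}{12}g''(\xi_m)$ (the factor $(t-t_m)(t-t_{m+1})$ has fixed sign, so the weighted mean value theorem applies) and the remaining constant $\frac{2^{2-\alpha}}{2-\alpha}-(1+2^{-\alpha})$ to explicit evaluation of the singular weights near $t=t_n$ (the powers of $2$ betray the window $[t_{n-2},t_n]$ of length $2\tau$).

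The one point you leave underspecified is precisely the hedge in your ``interval(s)'': after integration by parts the bulk terms carry the weight $\alpha(t_n-t)^{-\alpha-1}$, and the comparison of the resulting Riemann-type sum with $\int\partial_t(t_n-t)^{-\alpha}dt$ fails already on the \emph{penultimate} interval, because the comparison integral $\int_{t_{n-1}}^{t_n}\alpha(t_n-s)^{-\alpha-1}ds$ diverges. So in a complete write-up the last \emph{two} subintervals must be split off and computed exactly against the explicit singular integrals --- which is exactly what generates $\frac{2^{2-\alpha}}{2-\alpha}$ (from $\int_0^{2\tau}v^{1-\alpha}dv$) and $1+2^{-\alpha}$ (from the kernel values at $t_{n-1}$ and $t_{n-2}$). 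Your closing paragraph correctly identifies this constant bookkeeping, together with the monotonicity and convexity of $b_m$, as the crux; since the paper itself offers nothing to compare against, your outline is a faithful and essentially correct account of the proof the cited source actually gives.
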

Therefore, for $n=1$,
\[\partial_t^{\alpha}g(t_1)\simeq  c[g(t_1)-g(t_0)],\]
and for $2\leq n\leq L$,
\[\partial_t^{\alpha}g(t_n)\simeq c[b_0g(t_n)-\sum_{m=1}^{n-1}(b_{n-m-1}-b_{n-m})g(t_m)-b_{n-1}g(t_0)] .\]
Since $y(t_0)=0$ and $\bar{p}(0)=\bar{p}(T-t_K)=0$, we have
the following estimation
\[\partial_t^{\alpha}y(t_n)\simeq c[b_0y(t_n)-\sum_{m=1}^{n-1}(b_{n-m-1}-b_{n-m})g(t_m),\]
and \[\partial_t^{\alpha}\bar{p}(T-t_n)\simeq c[b_0\bar{p}(T-t_n)-\sum_{m=1}^{K-n-1}(b_{K-n-m-1}-b_{K-n-m})\bar{p}(T-t_{K-m}).\]
We define a matrix
\[D=\begin{bmatrix}
cb_0& 0&0&\cdots & 0\\c(b_1-b_0)&cb_0&0&\cdots &0\\c(b_2-b_1)& c(b_1-b_0)&cb_0&\cdots&0\\
\vdots &\vdots \ddots&\vdots&\vdots &\vdots\\ c(b_{K-1}-b_{K-2})&c(b_{K-2}-b_{K-3})&\cdots&c(b_1-b_0) &cb_0
\end{bmatrix}
\]
and
\[
vec(y)=[y_1(t_1),\cdots,y_{N_h}(t_1),y_1(t_2),\cdots,y_{N_h}(t_2),\cdots, y_1(t_{K}),\dots,y_{N_h}(t_{K})],\]
\[vec(y_d)=[y_{d1}(t_0),\cdots,y_{dN_h}(t_0),y_{d1}(t_1),\cdots,y_{dN_h}(t_1),\cdots, y_{d1}(t_{K-1}),\dots,y_{dN_h}(t_{K-1})],\]
\begin{align*}
vec(\bar{p})=[&\bar{p}_1(T-t_0),\cdots,\bar{p}_{N_h}(T-t_0),\bar{p}_1(T-t_1),\cdots,\bar{p}_{N_h}(T-t_1),\cdots, \\ &\bar{p}_1(T-t_{K-1}),\cdots,\bar{p}_{N_h}(T-t_{K-1})].
\end{align*}
By using Lemma \ref{Le5.1} the algebraic form of equation \eqref{eq2.5} is as following:
\begin{align}
\begin{cases}
-[D\otimes M+\mu I_{K}\otimes A]vec(y)-\frac{1}{\gamma}[I_b\otimes M ]vec(\bar{p})=0,\\
\\
[I_b^{T}\otimes M]vec(y)-[D^{T}\otimes M+\mu I_{K}\otimes A]vec(\bar{p})=vec(Y_d),
\end{cases}
\end{align}
where $I_{K}$ is the $K\times K$ unitary matrix and $I_b $ is a $K\times K$ matrix which the first diag of it is equal to one.
\section{Reduced basis method}\label{se6}
In general, the RBM constructs the reduced basis using the greedy algorithm and pre-compute the parameter independent parts of matrices at the off-line stage. We assemble the matrices using the coefficients at new parameter, solve the system and compute the output at the on-line stage. In the whole process, we restrict the approximate space to the much smaller subspace chosen by the greedy algorithm and discard the unnecessary modes during the calculation of the basis.
\subsection{Off-line stage}
We employ the reduced basis method for the efficient solutions of equation  \eqref{eq2.5}.  We first assume that a sample set
 $S_N=\{\mu_1,...,\mu_N\}$ is given, the reduced basis spaces are
\[Y_N=span\{\zeta^y_n, 1\leq n \leq N \},\]
\[P_N=span\{\xi^p_n, 1\leq n \leq N\}.\]
Here $\zeta^y_n$ and $\xi^p_n$, $1\leq n\leq N$ are $(.,.)_Y$-orthogonal basis functions derived by a Gram-Schmidt orthogonalization procedure and also $N\ll N_h$. We remark the greedy sampling procedure to construct $S_N, Y_N, P_N$ in Section 5.2.
The reduced  optimal control problem is as following:
\begin{align}\label{eq6.1}
\min \quad J(y_N,u_N;\mu)=\frac{1}{2}\int_{Q}\|y_N(.;\mu)-y_{Nd}&\|^2_{Y_N}+\frac{\gamma}{2}\int_{Q}\|u_N(.;\mu)\|^2_{U_N},\\
\text{S.t}\nonumber\\
 -\partial _t^{\alpha}(y_N,v)- a(y_N,v;\mu)&=(u_N,v),\qquad  \qquad\qquad\,\,\quad\, v\in Y_N, \nonumber\\
y_N(.,t)&=0, \quad \quad\quad\quad \qquad\qquad \qquad\,\text{on $\partial \Omega$},\nonumber \\
y(.,0)&= 0,  \quad \quad\quad\quad \quad\quad\quad\qquad\quad\text{ in $\Omega$}, \nonumber
\end{align}
where $y_{Nd} $ is the projection of $y_d$ on the N dimensional space $Y_N$.\\
The first-order optimality system of \eqref{eq6.1} reads: Given $\mu\in D$, the optimal solution $(y_N,\bar{p}_N)$ satisfies\\
\begin{align*}
\begin{cases}%\label{eq6.2}
-\partial _t^{\alpha}(y_N(t),\phi)- a(y_N(t),\phi;\mu)=\frac{1}{\gamma}(\bar{p}_N(T-t),\phi),\quad \quad \quad \quad\quad \quad\quad \quad\quad\phi\in Y_N, \\
 \\
 -\partial _t^{\alpha}(\bar{p}_N(T-t),\varphi)- a(\bar{p}_N(T-t),\varphi;\mu)+(y_N(t),\varphi)=(y_d(t),\varphi),\quad \quad \varphi \in Y_N,\\
 \\
 y_N(.,0)= 0,\quad \quad \bar{p}_N(.,T-t_K)=0,\qquad \qquad\qquad \qquad\qquad\qquad\qquad\quad \text{in $\Omega$}, \\
\\
 y_N(.,t)=0 ,\quad \quad  \bar{p}_N(.,T-t)= 0,\qquad \qquad \qquad\qquad \qquad\qquad\quad\qquad\quad \text{on $\partial \Omega$}.
\end{cases}
\end{align*}
\\
The system for $t=t_k$ is as following:
 \begin{align}
 \begin{cases}\label{eq6.3}
 -\partial _t^{\alpha}(y_N(t_k),\phi)- a(y_N(t_k),\phi;\mu)=\frac{1}{\gamma}(\bar{p}_N(T-t_k),\phi),\quad \quad \quad\quad \quad\quad\quad \quad \phi\in Y_N, k \in\frac{\mathbb{K}}{\{0\}}, \\
 \\
  -\partial _t^{\alpha}(\bar{p}_N(T-t_k),\varphi)- a(\bar{p}_N(T-t_k),\varphi;\mu)+(y_N(t_k),\varphi)=(y_d(t_k),\varphi), \quad \varphi \in Y_N, k \in\frac{\mathbb{K}}{\{K\}},\\
  \\
 y_N(.,0)= 0,\quad \quad  \bar{p}_N(.,T-t_{K})= 0,\quad \quad\qquad\qquad\qquad\qquad\qquad\qquad\qquad \text{on $\partial \Omega$},\\
\\ 
y_N(.,t_k)=0,\quad \quad \bar{p}_N(.,T-t_k)=0,\quad \quad\qquad\qquad\qquad\qquad\qquad\qquad \qquad\text{in  $ \Omega$}.
\end{cases}
\end{align}
The reduced state and adjoint solution are defined as
\[y_N(t_k)=\sum_{i=1}^Ny_{Ni}(t_k)\zeta_i ,\quad\quad \bar{p}_N(T-t_k)=\sum_{i=1}^N \bar{p}_{Ni}(T-t_k)\xi_i.\]
Denote the coefficient vectors as $y^k_N(t_k)=[y_{N1}(t_k),...,y^k_{NN}(t_k)]^T\in \mathbb{R}^N$  and $\bar{p}_N(T-t_k)=[\bar{p}_{N1}(T-t_k),...,\bar{p}_{NN}(T-t_k)]^T\in \mathbb{R}^N$, respectively.
By using Lemma \ref{Le5.1} the algebraic form of equation \eqref{eq6.3} is as following:
\begin{align}\label{eq6.4}
\begin{cases}
-[D\otimes M_N+\mu I_{K}\otimes A_N]vec(y_N)-\frac{1}{\gamma}[I_b\otimes B_N]vec(\bar{p}_N)=0,\\
\\
[I_b^{T}\otimes B^{'}_N]vec(y_N)-[D^{T}\otimes M^{'}_N+\mu I_{K}\otimes A^{'}_N]vec(\bar{p}_N)=vec(Y_{dN}).
\end{cases}
\end{align}
Here, $M_N, M^{'}_N, B_N,B^{'}_N\in R^{N\times N}$ are matrices with entries
\[ (M_N)_{ij}=(\zeta_i,\zeta_j), \quad\quad\quad\quad (M^{'}_N)_{ij}=(\xi_i,\xi_j),\]
\[(B_N)_{ij}=(\zeta_i,\xi_j), \quad\quad\quad\quad (B^{'}_N)_{ij}=(\xi_i,\zeta_j). \] The parameter dependent  matrices
$A_N, A^{'}_N\in R^{N\times N}$ are given by

\[ (A_N)_{ij}=a(\zeta_i,\zeta_j), \quad\quad\quad\quad (A^{'}_N)_{i,j}=a(\xi_i,\xi_j)\]
  and $Y^k_{dN}$ is defined as :
\[Y_{dN}(t_k)=(y_d(t_k),\xi_i),\]
\[
vec(y_N)=[y_1(t_1),\cdots,y_{N}(t_1),y_1(t_2),\cdots,y_{N}(t_2),\cdots, y_1(t_{K}),\dots,y_{N}(t_{K})],\]
\[vec(y_{dN})=[y_{d1}(t_0),\cdots,y_{dN}(t_0),y_{d1}(t_1),\cdots,y_{dN}(t_1),\cdots, y_{d1}(t_{K-1}),\dots,y_{dN}(t_{K-1})],\]
\begin{align*}
vec(\bar{p}_N)=[&\bar{p}_1(T-t_0),\cdots,\bar{p}_{N}(T-t_0),\bar{p}_1(T-t_1),\cdots,\bar{p}_{N}(T-t_1),\cdots, \\ &\bar{p}_1(T-t_{K-1}),\cdots,\bar{p}_{N}(T-t_{K-1})].
\end{align*}
\subsection{ Greedy Sampling Procedure}
We generate the reduced basis space using the greedy sampling procedure summarized in Algorithm 1. $D_{\text{train}}\subset D$ is a finite but suitably large parameter train sample; $\mu_1$ is the initial parameter value and $\epsilon >0$ is a prescribed desired error tolerance. Since we can only guarantee the desired error tolerance for all $\mu\in D_{\text{train}}$, we note that we have to choose the train sample sufficiently fine. Now we present the greedy algorithm to generate the reduced basis spaces.
\\\subsubsection*{Algorithm 1}Greedy procedure\\
\begin{tabular}{l|l}
  \hline

  % after \\: \hline or \cline{col1-col2} \cline{col3-col4} ...
  1 & Choose $D_{\text{train}}\subset D$, $\mu_1\in D_{\text{train}}$(arbitary), and $\epsilon >0$. \\\hline
  2 & N=1, $\mu^*\leftarrow\mu_1$, $S_N=\{\mu^*\}$, $\triangle_{N}(\mu^*)\leftarrow\triangle_{Ny}(\mu^*)+\triangle_{Np}(\mu^*)$, \\\hline
  3 & Set $Y_N=POD(\{y^1(\mu^*),...,y^{K}(\mu^*)\})$ and\\ &  $P_N=POD(\{p^0(\mu^*),...,p^{K-1}(\mu^*)\})$,
 \\\hline
  4 & while  $\triangle_{N}(\mu^*)>\epsilon$ do \\\hline
  5 & $\forall \mu \in D_{\text{train}}$ compute the reduced solution $(y_N,p_N)$  and\\ &  $\triangle_{Ny}(\mu)\leftarrow \|y_h(\mu)-y_N(\mu)\|$,\\ &    $\triangle_{Np}(\mu)\leftarrow \|p_h(\mu)-p_N(\mu)\|$, $\triangle_{N}(\mu)\leftarrow \|\triangle_{Ny}(\mu)\|+\|\triangle_{Np}(\mu)\|$, \\\hline
  6 & $\mu^*\leftarrow argmax_{\mu \in D_{\text{train}}}\triangle_{N}(\mu), S_N=S_N\cup \{\mu^*\}$, $Y_N=Y_N\cup POD(\{y^1(\mu^*),...,y^{K}(\mu^*)\})$,\\& $P_N=P_N\cup POD(\{p^0(\mu^*),...,p^{K-1}(\mu^*)\})$,\\\hline
  7 & $\triangle_{N}(\mu^*)\leftarrow max_{\mu \in D_{\text{train}}} \triangle_{N}(\mu)$, \\\hline
  8 & $N\longleftarrow N+1$, \\\hline
  9 & end while. \\
  \hline
\end{tabular}
We note that the reduced basis $Y_N$ and $P_N$ are enlarged in step 6 according to the evaluation of $\triangle _N(\mu)$ defined in step 5 as the real error between the FE solution and the RB one. This is  computationally expensive and time-consuming. To avoid this, we obtain the error bound described in Section 6.
\subsection{Online Stage}
Let $\mu\in D$ is given. By using the information obtained in the offline stage, we solve the system \eqref{eq6.4} for the  given parameter. For more detail about reduced basis method, see \cite{kar}
\section{Error bound}\label{se7}
To begin, we introduce the dual norm of the primal (state) residual 
\begin{align}\label{eq7.11}
\varepsilon^{pr}(\mu,t_k)\equiv \sup_{v\in Y}\frac{R^{pr}(v;\mu,t_k)}{\|v\|_Y},
\end{align}
and the dual norm of the dual (adjoint) residual
\begin{align}
\varepsilon ^{du}(\mu,T-t_k)\equiv \sup_{v\in Y}\frac{R^{du}(v;\mu,T-t_k)}{\|v\|_Y},
\end{align}
where 
\begin{align}
&R^{pr}(v;\mu,t_k)=(u^k,v)+a(y_N(\mu,t_k),v)-cb_{k-1}(y_N(t_0),v)\label{eq7.2}\\&-c\sum_{m=1}^{k-1}(b_{k-m-1}-b-{k-m})(y_N(\mu,t_m),v)+cb_0(y_N(\mu,t_k),v),\nonumber \\
&R^{du}(v;\mu,T-t_k)=(y_d(t_k),v)-(y(\mu,t^k),v)+a(\bar{p}_N(\mu,T-t_k),v)\label{eq7.3}\\&-c\sum_{m=1}^{K-k-1}(b_{K-k-m-1}-b_{K-k-m})(\bar{p}_N(\mu,T-t_{K-m}),v)+cb_0(\bar{p}_N(\mu,T-t_k),v).\nonumber
\end{align}
\begin{lemma}\label{Le7.1}
Let $e^{pr}(\mu,t_k)\equiv y_(\mu,t_k)-y_N(\mu,t_k)$ be the error in the primal variable and define the norm 
\[\|v(\mu,t_k)\|^{pr}=((v(\mu,t_k),v(\mu,t_k);\mu)+a(v(\mu,t_k),v(\mu,t_k);\mu))^{\frac{1}{2}}.\]
The error in the primal variable is then bounded by
\[\|e^{pr}(\mu,t_k)\|^{pr}\leq [ \frac{1}{\alpha(\mu)}(\varepsilon^{pr}(\mu,t_{k}))^2+\sum_{k^{'}=1}^{k-1}\Delta_{k^{'}}]^{\frac{1}{2}},\]
where 
\[\Delta_{k^{'}}=\sum_{k^{"}=1}^{k^{'}}\frac{1}{\alpha(\mu)}(\varepsilon^{pr}(\mu,t_{k^{"}}))^2.\]
\end{lemma}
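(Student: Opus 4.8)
The plan is to follow the standard residual–based a posteriori strategy for reduced basis methods, adapted to accommodate the nonlocal memory induced by the time-fractional scheme of Lemma~\ref{Le5.1}. First I would establish the error equation. Since the reduced solution $y_N$ satisfies the optimality system \eqref{eq6.3} for every test function in $Y_N$, while the truth solution $y$ makes the primal residual \eqref{eq7.2} vanish for every $v\in Y$, subtracting the two relations and using that both carry the homogeneous initial condition, so $e^{pr}(\mu,t_0)=y(\mu,t_0)-y_N(\mu,t_0)=0$, yields the error identity
\[
a(e^{pr}(\mu,t_k),v;\mu)+cb_0(e^{pr}(\mu,t_k),v)-c\sum_{m=1}^{k-1}(b_{k-m-1}-b_{k-m})(e^{pr}(\mu,t_m),v)=-R^{pr}(v;\mu,t_k)
\]
valid for all $v\in Y$.

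Next I would derive an energy estimate by choosing $v=e^{pr}(\mu,t_k)$. Recalling the definition of the $\|\cdot\|^{pr}$-norm, the first two terms assemble into $(\|e^{pr}(\mu,t_k)\|^{pr})^2$, so that
\[
(\|e^{pr}(\mu,t_k)\|^{pr})^2 = c\sum_{m=1}^{k-1}(b_{k-m-1}-b_{k-m})(e^{pr}(\mu,t_m),e^{pr}(\mu,t_k))-R^{pr}(e^{pr}(\mu,t_k);\mu,t_k).
\]
The residual term is controlled through its dual norm \eqref{eq7.11}, giving $|R^{pr}(e^{pr};\mu,t_k)|\le \varepsilon^{pr}(\mu,t_k)\,\|e^{pr}(\mu,t_k)\|_Y$, and a Young inequality splits this as $\tfrac{1}{2\alpha(\mu)}(\varepsilon^{pr}(\mu,t_k))^2+\tfrac{\alpha(\mu)}{2}\|e^{pr}(\mu,t_k)\|_Y^2$. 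The quadratic part is absorbed into the left-hand side by the coercivity of $a(\cdot,\cdot;\mu)$ with constant $\alpha(\mu)$, which is exactly what produces the leading contribution $\tfrac{1}{\alpha(\mu)}(\varepsilon^{pr}(\mu,t_k))^2$ of the claimed estimate.

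The crucial step is then the treatment of the nonlocal memory sum, and this is where I expect the main obstacle. I would exploit the structural properties of the weights $b_m=(m+1)^{1-\alpha}-m^{1-\alpha}$: concavity of $x\mapsto x^{1-\alpha}$ for $\alpha\in(0,1)$ forces the sequence $b_m$ to be positive and strictly decreasing, so each coefficient $b_{k-m-1}-b_{k-m}$ is positive, and the telescoping identity $\sum_{m=1}^{k-1}(b_{k-m-1}-b_{k-m})=b_0-b_{k-1}\le b_0$ bounds their total mass. Applying Cauchy–Schwarz to every cross term $(e^{pr}(\mu,t_m),e^{pr}(\mu,t_k))$ and relating the $L^2$-norm of each earlier error to its $\|\cdot\|^{pr}$-norm, I would then feed in the inductive hypothesis that the stated bound already holds for every index below $k$; the positivity and summability of the weights are precisely what allow the weighted cross terms to be converted into the accumulated quantity $\sum_{k'=1}^{k-1}\Delta_{k'}$ without exceeding the available coercivity budget.

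Finally, I would close the argument by induction on $k$. The base case $k=1$ contains no memory term, so the energy estimate gives $\|e^{pr}(\mu,t_1)\|^{pr}\le\big(\tfrac{1}{\alpha(\mu)}(\varepsilon^{pr}(\mu,t_1))^2\big)^{1/2}$ at once. In the induction step the leading residual term combines with the accumulated memory contribution to yield
\[
(\|e^{pr}(\mu,t_k)\|^{pr})^2\le \frac{1}{\alpha(\mu)}(\varepsilon^{pr}(\mu,t_k))^2+\sum_{k'=1}^{k-1}\Delta_{k'},
\]
which is the asserted bound after taking square roots. The delicate point throughout is the fractional coupling: unlike the classical parabolic setting, where only the immediately preceding time level enters, here every prior error propagates into the current one, and the careful bookkeeping of the nested sums $\Delta_{k'}$, justified by the monotonicity of the $b_m$-weights, is what makes the accumulation rigorous.
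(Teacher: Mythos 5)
Your overall strategy coincides with the paper's proof: the same error identity, the same energy test with $v=e^{pr}(\mu,t_k)$, the dual-norm bound \eqref{eq7.11} on the residual split by Young's inequality and absorbed through the coercivity constant $\alpha(\mu)$, the positivity and monotonicity of the weights $b_m$ to control the memory sum, and a step-by-step accumulation in time --- the paper carries this out as an explicit unrolling from $k'=1$ to $k$, which is your induction in different clothes.

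There is, however, one concrete gap in your induction step. You propose to take as inductive hypothesis the final statement of the lemma for earlier indices, and to handle the memory terms by ``relating the $L^2$-norm of each earlier error to its $\|\cdot\|^{pr}$-norm.'' That relation only gives $\|e^{pr}(\mu,t_m)\|\leq \|e^{pr}(\mu,t_m)\|^{pr}$ with constant one, whereas the memory sum enters the energy estimate with the weight $c=\tau^{-\alpha}/\Gamma(2-\alpha)$, which is large and blows up as $\Delta t\to 0$. Under your hypothesis the memory term is therefore only bounded by $c\sum_{m=1}^{k-1}\bigl(\|e^{pr}(\mu,t_m)\|^{pr}\bigr)^2$, an uncontrolled extra factor of $c$ that the coercivity budget cannot absorb, so the claimed accumulation $\sum_{k'=1}^{k-1}\Delta_{k'}$ does not follow. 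The paper avoids this by inducting on (equivalently, unrolling) the \emph{stronger} intermediate inequality
\[
a\bigl(e^{pr}(\mu,t_{k'}),e^{pr}(\mu,t_{k'})\bigr)+c\,\|e^{pr}(\mu,t_{k'})\|^2\leq \Delta_{k'},
\]
whose left-hand side already carries the factor $c$; then $c\sum_{m=1}^{k-1}\|e^{pr}(\mu,t_m)\|^2\leq \sum_{m=1}^{k-1}\Delta_m$ drops out directly, and the final $\|\cdot\|^{pr}$ bound is recovered only at the very last step (using $c\geq 1$ for $\tau\leq 1$). Your argument is repaired simply by strengthening the inductive hypothesis to this $c$-weighted energy inequality; with that change it matches the paper's proof step for step. (Incidentally, your telescoping bound $\sum_{m=1}^{k-1}(b_{k-m-1}-b_{k-m})=b_0-b_{k-1}\leq b_0$ is actually cleaner than the paper's per-term bound via $b_0-b_1$, which as written conflates $\bigl(\sum_m\|e_m\|\bigr)^2$ with $\sum_m\|e_m\|^2$.)
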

\begin{proof}
We immediately derive from Equation \eqref{eq7.11} that $e^{pr}(\mu,t_k)$ satisfies 
\[a(e^{pr}(\mu,t_k),v)+cb_0(e^{pr}(\mu,t_k),v)=c\sum_{m=1}^{k-1}(b_{k-m-1}-b_{k-m})(e^{pr}(\mu,t_m),v)+R^{pr}(v;\mu,t_k).\]
Set $v=e^{pr}(\mu,t_k)$, by equation \eqref{eq2.2}, we obtain
\begin{align*}
&a(e^{pr}(\mu,t_k),e^{pr}(\mu,t_k))+cb_0(e^{pr}(\mu,t_k),e^{pr}(\mu,t_k) )\\ &\leq c\sum_{m=1}^{k-1}(b_{k-m-1}-b_{k-m})(e^{pr}(\mu,t_m),e^{pr}(\mu,t_k))+\varepsilon^{pr}(\mu,t_{k})\|e^{pr}(\mu,t_k)\|\\&\leq
c\sum_{m=1}^{k-1}(b_{k-m-1}-b_{k-m})(e^{pr}(\mu,t_m),e^{pr}(\mu,t_k))+\varepsilon^{pr}(\mu,t_{k})\|e^{pr}(\mu,t_k)\|\\&\leq c\sum_{m=1}^{k-1}\max(b_{k-m-1}-b_{k-m})(e^{pr}(\mu,t_m),e^{pr}(\mu,t_k))+\varepsilon^{pr}(\mu,t_{k})\|e^{pr}(\mu,t_k)\|.
\end{align*}
We know that
\begin{align*}
& b_0=1, \quad \max(b_{k-m-1}-b_{k-m})=b_0-b_1<1,\quad (a,b)\leq\|a\| \|b\|, 
\end{align*}
so we have
\begin{align*}
&(e^{pr}(\mu,t_k),e^{pr}(\mu,t_k)+c(e^{pr}(\mu,t_k),e^{pr}(\mu,t_k)\leq
\\&
\leq c\sum_{m=1}^{k-1}\|e^{pr}(\mu,t_m)\|\|e^{pr}(\mu,t_k)\|+\varepsilon^{pr}(\mu,t_{k})\|e^{pr}(\mu,t_k)\|.
\end{align*}
Since
\[\text{ and (for }c, d\in \mathbb{R}, \rho\in \mathbb{R}_{+}), \quad  2|a||b|\leq \frac{1}{\rho^2}a^2+\rho^2 b^2,\]
we apply it twice, 
\[a=\sqrt{c}\sum_{m=1}^{k-1}\|e^{pr}(\mu,t_m)\|,\quad b=\sqrt{c}\|e^{pr}(\mu,t_k)\| ,\quad \rho=1,\]
\[a=\varepsilon^{pr}(\mu,t_{k}),\quad b=\|e^{pr}(\mu,t_k)\|,\quad \rho=\alpha(\mu),\]
and by Equation \eqref{eq2.2}, we obtain
\begin{align*}
&a(e^{pr}(\mu,t_k),e^{pr}(\mu,t_k)+c(e^{pr}(\mu,t_k),e^{pr}(\mu,t_k)\leq 
c\sum_{m=1}^{k-1}(e^{pr}(\mu,t_m),e^{pr}(\mu,t_m)+\frac{1}{\alpha(\mu)} (\varepsilon^{pr}(\mu,t_{k})^2,
\end{align*}
\begin{align*}
&a(e^{pr}(\mu,t_k),e^{pr}(\mu,t_k)+c\|e^{pr}(\mu,t_k)\|^2\leq 
c\sum_{m=1}^{k-1}\|e^{pr}(\mu,t_m)\|^2+\frac{1}{\alpha(\mu)} (\varepsilon^{pr}(\mu,t_{k})^2.
\end{align*}
We now perform it from $k^{'}=1$ to $k$ and $e^{pr}(\mu,t_0)=0$, which completes the proof.
For $k'^{'}=1$,
\begin{align}\label{eq7.a}
&a(e^{pr}(\mu,t_1),e^{pr}(\mu,t_1))+c\|e^{pr}(\mu,t_1)\|^2\leq \frac{1}{\alpha(\mu)} (\varepsilon^{pr}(\mu,t_{k})^2=\Delta_1 \Rightarrow \nonumber\\& c\|e^{pr}(\mu,t_1)\|^2\leq \Delta_1,
\end{align}
 $k^{'}=2$,
\begin{align*}
&a(e^{pr}(\mu,t_2),e^{pr}(\mu,t_2))+c\|e^{pr}(\mu,t_2)\|^2\leq 
c\|e^{pr}(\mu,t_1)\|^2+\frac{1}{\alpha(\mu)} (\varepsilon^{pr}(\mu,t_{2})^2.
\end{align*}
By equation \eqref{eq7.a}, we obtain
\begin{align*}
&a(e^{pr}(\mu,t_2),e^{pr}(\mu,t_2))+c\|e^{pr}(\mu,t_2)\|^2 \leq \Delta_1+\frac{1}{\alpha(\mu)} (\varepsilon^{pr}(\mu,t_{2})^2=\Delta_2,\Rightarrow\nonumber\\& c\|e^{pr}(\mu,t_2)\|^2\leq \Delta_2,\\
&\vdots\\
k^{'}=k,\\
&a(e^{pr}(\mu,t_k),e^{pr}(\mu,t_k)+c\|e^{pr}(\mu,t_k)\|^2\leq [ \frac{1}{\alpha(\mu)}(\varepsilon^{pr}(\mu,t_{k}))^2+\sum_{k^{'}=1}^{k-1}\Delta_{k^{'}}]\Rightarrow\\
&\|e^{pr}(\mu,t_k)\leq [ \frac{1}{\alpha(\mu)}(\varepsilon^{pr}(\mu,t_{k}))^2+\sum_{k^{'}=1}^{k-1}\Delta_{k^{'}}]^{\frac{1}{2}}.
\end{align*}
\end{proof}
\begin{lemma}
Let $e^{du}(\mu,T-t_k)\equiv \bar{p}_(\mu,T-t_k)-\bar{p}_N(\mu,T-t_k)$ be the error in the dual variable and define the norm 
\[\|v(\mu,t_k)\|^{du}=((v(\mu,t_k),v(\mu,t_k);\mu)+a(v(\mu,t_k),v(\mu,t_k);\mu))^{\frac{1}{2}}.\]
The error in the dual variable is then bounded by
\[\|e^{du}(\mu,T-t_k)\|^{du}\leq [\frac{1}{\alpha(\mu)}(\varepsilon^{du}(\mu,{T-t_{k}} ))^2+\sum_{k^{'}=k+1}^K\Delta_{k^{'}}]^{\frac{1}{2}}\]
where 
\[\Delta_{k^{'}}=\sum_{k^{"}=k^{'}}^{K}\frac{1}{\alpha(\mu)}(\varepsilon^{du}(\mu,{T-t_{k^{"}}}))^2.\]
\end{lemma}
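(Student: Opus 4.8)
The plan is to transcribe the argument of Lemma \ref{Le7.1} to the backward-in-time adjoint setting, exploiting the fact that the change of variables $\bar p(\cdot)=p(\cdot,T-t)$ renders the dual time-stepping formally identical to the primal one. First I would use the definition of the dual residual \eqref{eq7.3} to write the equation satisfied by $e^{du}(\mu,T-t_k)$,
\begin{align*}
a(e^{du}(\mu,T-t_k),v)+cb_0(e^{du}(\mu,T-t_k),v)=c\sum_{m=1}^{K-k-1}(b_{K-k-m-1}-b_{K-k-m})(e^{du}(\mu,T-t_{K-m}),v)+R^{du}(v;\mu,T-t_k),
\end{align*}
and then choose the test function $v=e^{du}(\mu,T-t_k)$. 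Invoking coercivity \eqref{eq2.2} on the left and bounding the residual term by $\varepsilon^{du}(\mu,T-t_k)\,\|e^{du}(\mu,T-t_k)\|$ reduces the estimate to an algebraic recursion in the energy quantity $\|e^{du}(\mu,T-t_k)\|^{du}$.

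Second I would control the convolution sum exactly as in the primal proof: using $b_0=1$, the inequality $\max(b_{K-k-m-1}-b_{K-k-m})=b_0-b_1<1$, and Cauchy--Schwarz $(a,b)\leq\|a\|\,\|b\|$, I would bound the right-hand side by $c\sum_m\|e^{du}(\mu,T-t_{K-m})\|\,\|e^{du}(\mu,T-t_k)\|$ plus the residual contribution. Applying the elementary inequality $2|a||b|\leq\frac{1}{\rho^2}a^2+\rho^2b^2$ twice --- once with $\rho=1$ to decouple the convolution products into squared norms at separate time levels, and once with $\rho=\alpha(\mu)$ to absorb the residual and generate the factor $\frac{1}{\alpha(\mu)}(\varepsilon^{du}(\mu,T-t_k))^2$ --- gives the one-step estimate
\begin{align*}
a(e^{du}(\mu,T-t_k),e^{du}(\mu,T-t_k))+c\|e^{du}(\mu,T-t_k)\|^2\leq c\sum_{m=1}^{K-k-1}\|e^{du}(\mu,T-t_{K-m})\|^2+\frac{1}{\alpha(\mu)}(\varepsilon^{du}(\mu,T-t_k))^2.
\end{align*}

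Finally I would close the recursion by iterating it, but in the reversed direction forced by the adjoint problem. The essential structural difference from Lemma \ref{Le7.1} is that the vanishing data are terminal rather than initial: since $\bar p_N(\cdot,T-t_K)=0$ we have $e^{du}(\mu,T-t_K)=0$, so the induction is seeded at $k'=K$ and proceeds with decreasing index, accumulating the residual contributions from the levels $k'=k+1,\dots,K$. Collecting these produces the cumulative quantities $\Delta_{k'}=\sum_{k''=k'}^{K}\frac{1}{\alpha(\mu)}(\varepsilon^{du}(\mu,T-t_{k''}))^2$, and after discarding the nonnegative coercive term and taking a square root one obtains the claimed bound. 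I expect the only real difficulty to be bookkeeping rather than analysis: one must handle the index shift $m\mapsto K-m$ carefully so that the summation ranges appearing in the iterated estimate coincide exactly with the ranges $k''=k',\dots,K$ and $k'=k+1,\dots,K$ asserted in the statement, which is precisely the mirror image of the forward accumulation in the primal case.
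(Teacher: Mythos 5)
Your proposal is correct and coincides with the paper's intended argument: the paper's own proof of this lemma consists of the single line ``Follow proof of Lemma \ref{Le7.1}'', and your write-up is exactly that transcription --- error equation from the dual residual, coercivity plus Young's inequality applied twice, then iteration of the one-step estimate. The only genuinely new ingredient you supply, correctly, is the reversed induction seeded by the terminal condition $e^{du}(\mu,T-t_K)=0$ with the index shift $m\mapsto K-m$, which is precisely the mirror-image bookkeeping the paper leaves implicit.
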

\begin{proof}
Follow proof of Lemma \ref{Le7.1}. For more detail, see \cite{kar}
\end{proof}
\section{Numerical simulations}\label{se8}
In this section, numerical experiments are conducted to validate the proposed method. We have solved the following three fractional PDE constrained optimization problems which are similar to test cases in \citep{19}. The simulation
is conducted on Matlab 7.
\begin{example}\label{ex7.1}
 In the problem $ (P) $, we take $T=1$, $x\in [0,1]$, $\alpha=0.7$ and
\begin{align*}
&y_d=\nu(2(t-1)^3x(x-1)+12t(t-1)^2x(x-1)+3t^2(2t-2)x(x-1))+\\&
+t^2(1-t)^3x(x-1).
\end{align*}

The graphs of finite element solution and reduced solution of $ y(x,t) $ and $ p(x,t) $ for $ t=0.2,\,0.4,\,0.6,\,0.8 $ with $ N=7 $,  $ \gamma =10^{-6} $ and $\mu=0.75$ are  plotted in Fig. \ref{fig1}. As you can see in Fig. \ref{fig1} the finite element solution and reduced solution are extremely close.  In Fig. \ref{fig2}, the error functions $ \|y_h-y_N \|_2$ and $ \|p_h-p_N \|_2$ with $ \nu =10^{-6} $ are depicted.

%\clearpage
\begin{figure}[h!]\label{fig1}
\centering
\subfloat[\label{fig1a}]{\includegraphics[scale=.2]{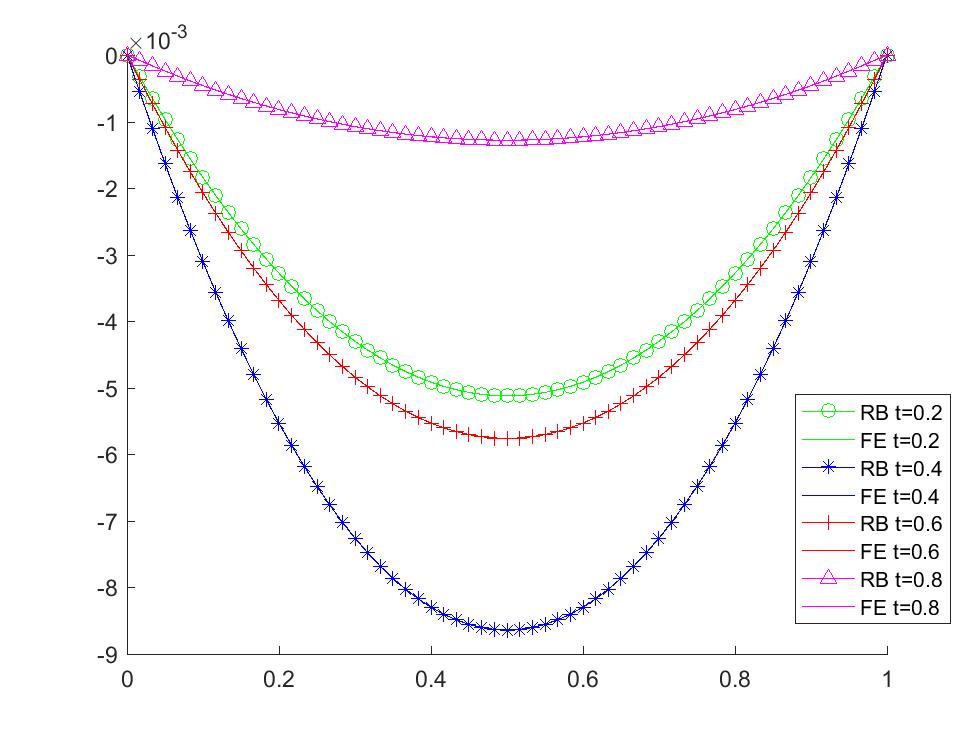}}
\subfloat[\label{fig1b}]{\includegraphics[scale=.2]{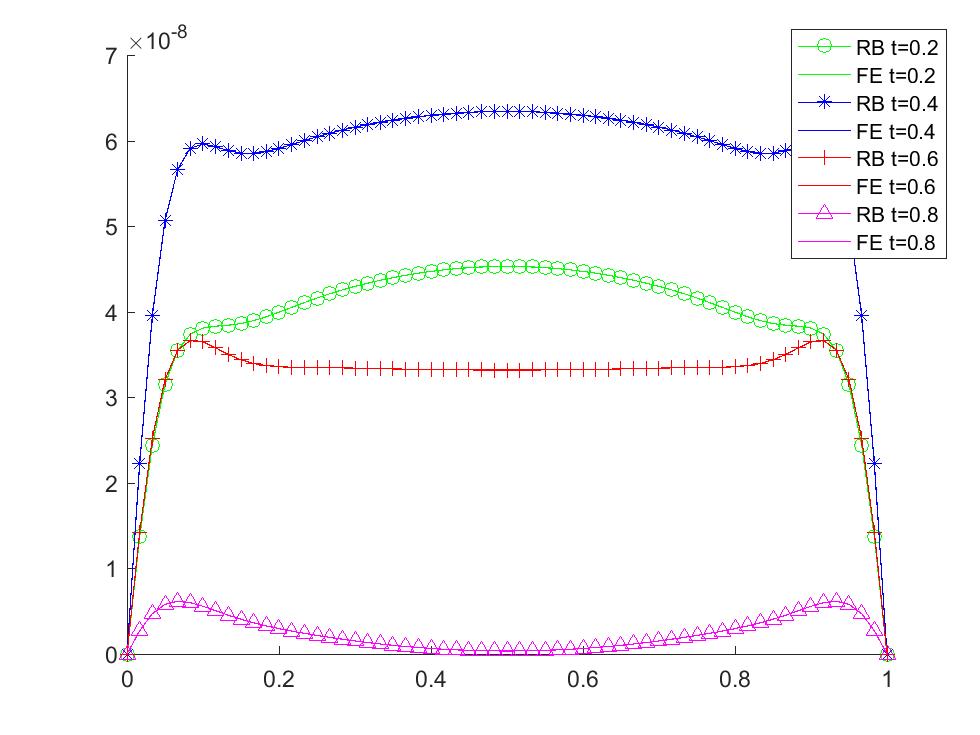}}
\caption{Comparisons between finite element and reduced  solutions of $y(x,t)$ \eqref{fig1a} and  $ p(x,t) $ \eqref{fig1b} in $t=0.2$s, $t=0.4$s, $t=0.6$s, $t=0.8$s with $ \gamma =10^{-6}, \mu=0.75 $ in Example \ref{ex7.1}.}\label{fig1}
\end{figure}

\begin{figure}[h!]\label{fig2}
\centering
\subfloat{\includegraphics[scale=.2]{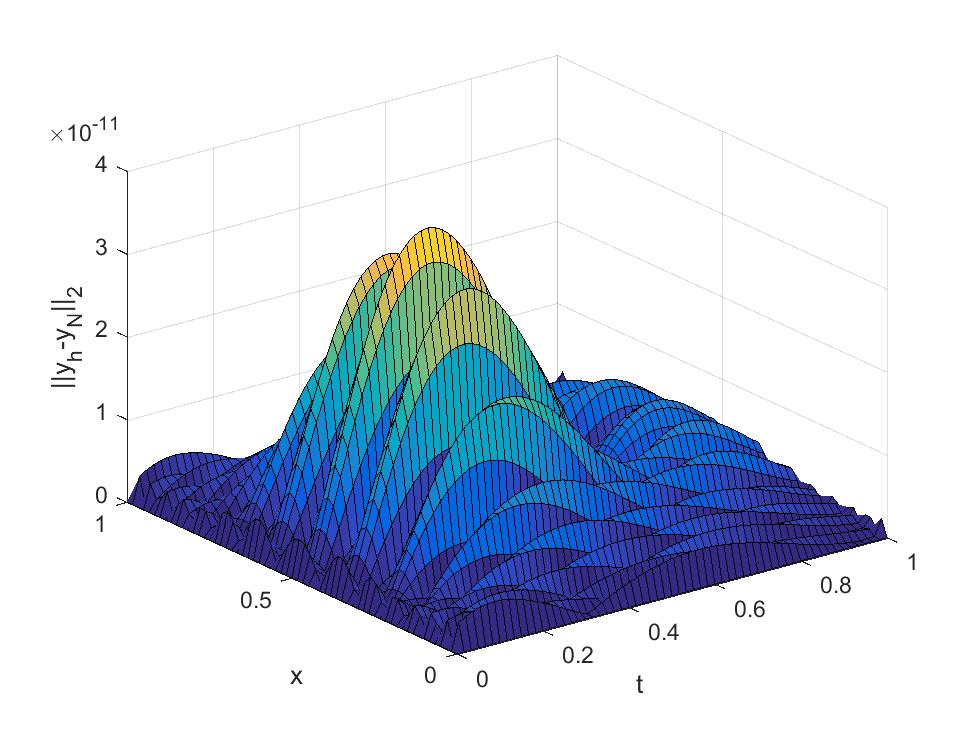}}
\subfloat{\includegraphics[scale=.2]{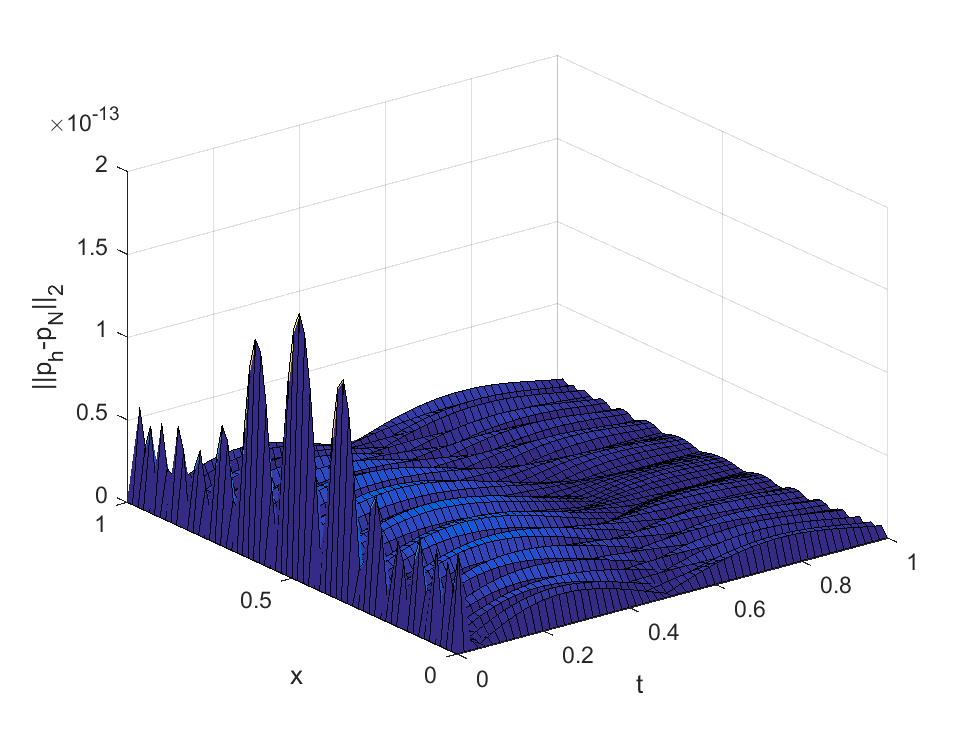}}
\caption{\footnotesize{Plots of $ \|y_h-y_N\| _2$ and $ \|p_h-p_N\|_2 $  with $ \gamma =10^{-6}, \mu=0.75 $ in Example \ref{ex7.1}.}}\label{fig2}
\end{figure}

See Fig. \ref{fig3} for the error bound of the reduced basis method for the state variable and adjoint variable. This method choose 7 parameters to construct the state space and adjoint space.

\begin{figure}[h!]\label{fig3}
\centering
\subfloat[\label{fig3a}]{\includegraphics[scale=.2]{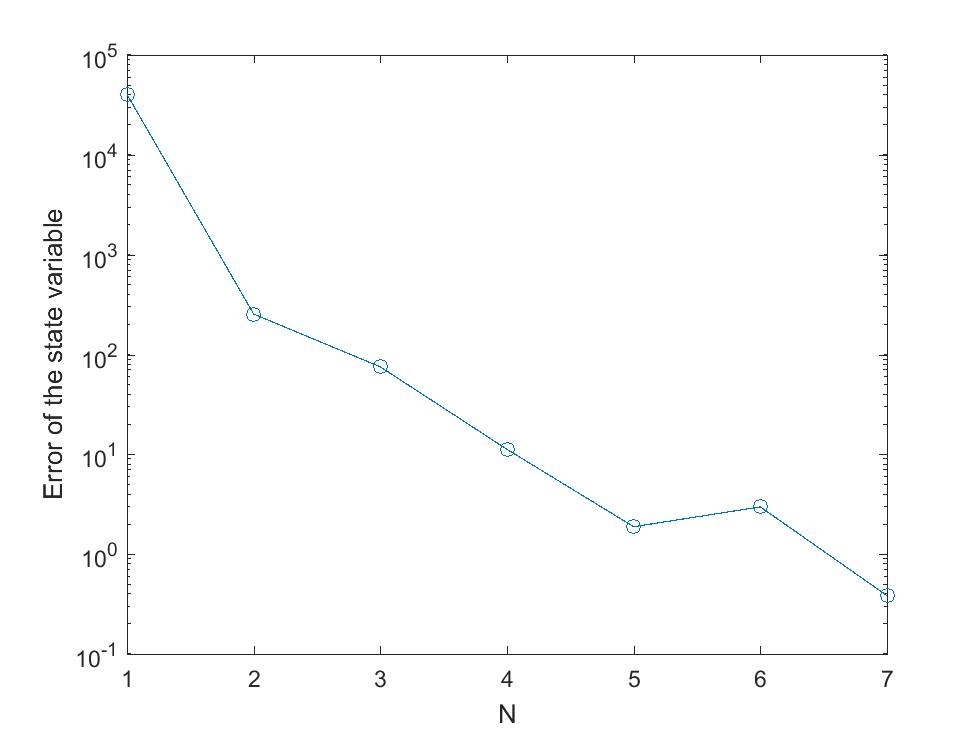}}
\subfloat[\label{fig3b}]{\includegraphics[scale=.2]{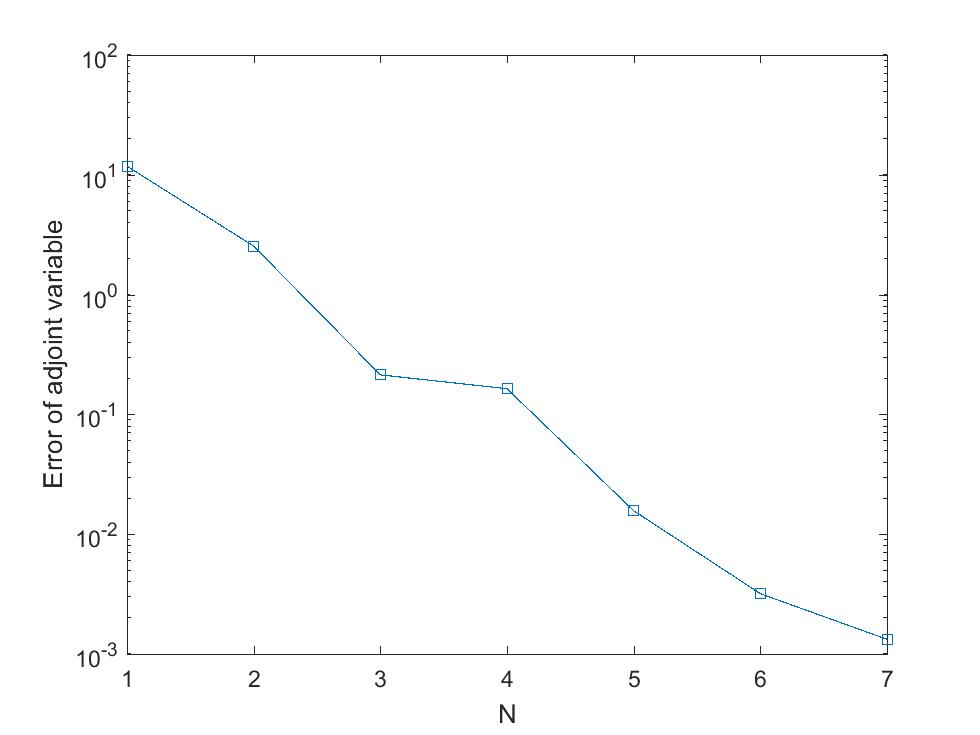}}
\caption{\footnotesize{The error bound for the state variable \eqref{fig3a} and adjoint variable  \eqref{fig3b}. }}
\end{figure}
\end{example}

\begin{example}\label{ex7.2}
In problem $ (P) $, we take $T=1$, $x\in [0,1]$, $\alpha=0.99$ and
\begin{align*}
&y_d=\gamma(2\pi ^2t(t-1)^2(t-2)^2+\pi ^4 t^2(t-1)^2(t-2)^2-2t^2(t-1)^2-2t^2(t-2)^2\\&-2(t-1)^2(t-2)^2-2t^2(2t-2)(2t-4)-4t(2t-2)(t-2)^2-4t(2t-4)(t-1)^2-\\&2\pi ^2t(t-1)^2(t-2)^2)\sin(\pi x)+t^2(1-t)^2(2-t)^2\sin(\pi x).
\end{align*}
%The exact solution of the state and adjoint variable are
%\begin{align*}
%&\bar{w}=t^2(1-t)^2(2-t)^2\sin(\pi x),\\
%&\bar{p}=\nu(-t^2((2t-t)(t-2)^2-t^2(2t-4)(t-1)^2-2t(t-1)^2(t-2)^2-\pi ^2t^2(1-t)^2(2-t)^2\sin(\pi x).
%\end{align*}
The graphs of finite element  and reduced solutions of $ y(x,t) $ and $ p(x,t) $ for $ t=0.2, 0.4, 0.6, 0.8 $ with $ N=1 $ and $ \gamma =10^{-8} $ are plotted in Fig. \ref{fig4}. In Fig. \ref{fig5}, the error functions $ \|y_h-y_N\| $ and $ \|p_h-p_N\| $ with $ \gamma =10^{-8} $ are shown.

\begin{figure}[h!]
\centering
\subfloat[\label{fig4a}]{\includegraphics[scale=.2]{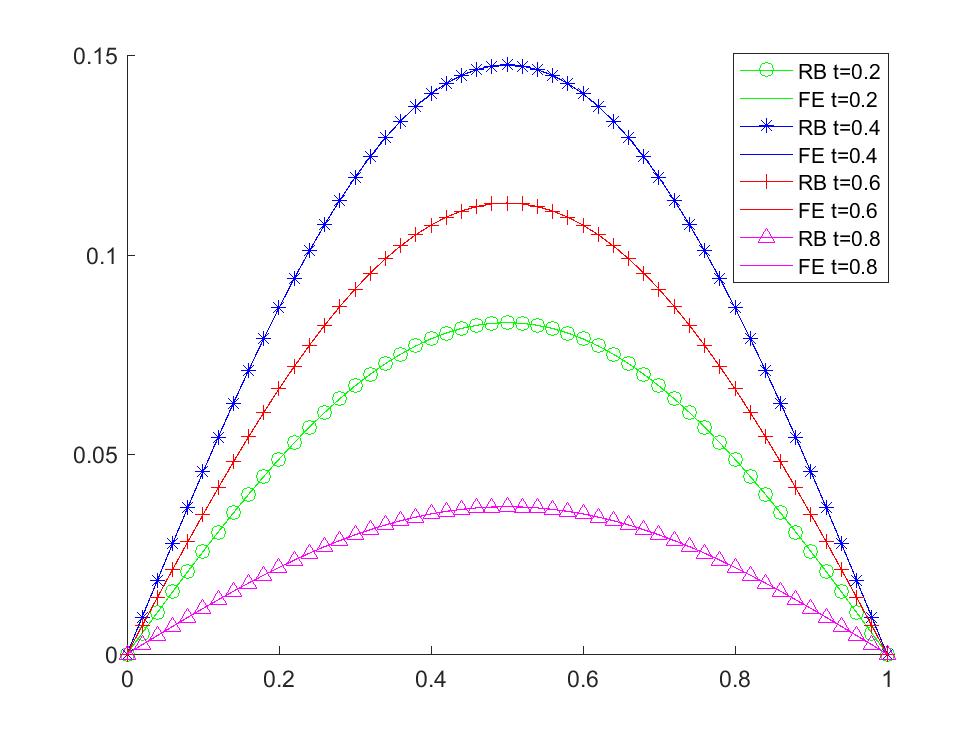}}
\subfloat[\label{fig4b}]{\includegraphics[scale=.2]{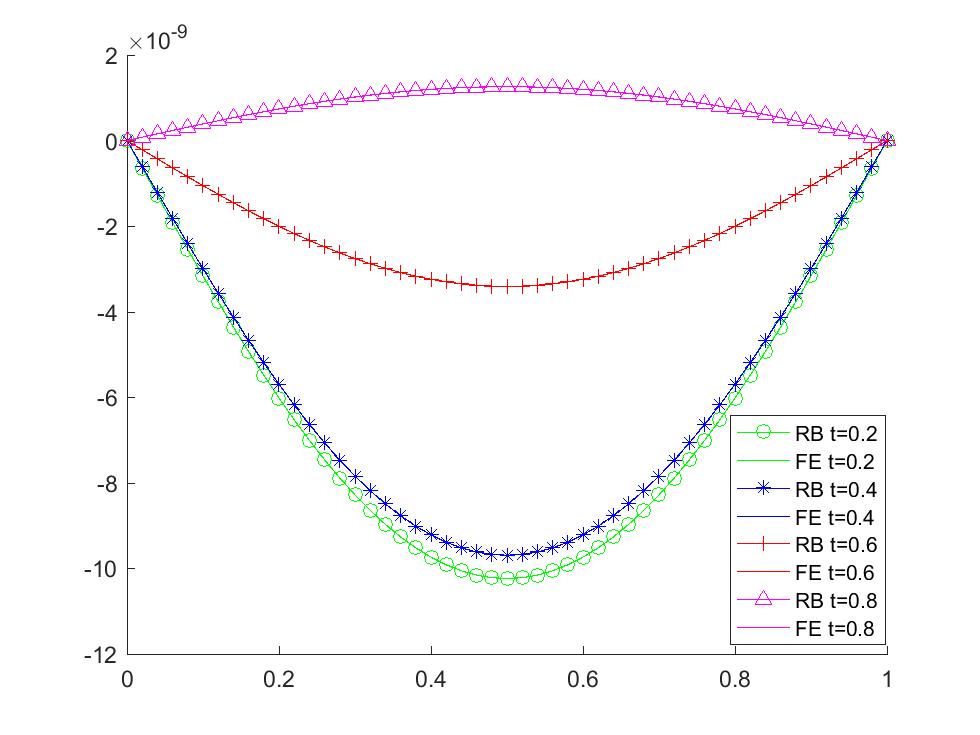}}
\caption{Comparisons between finite element and reduced  solutions of $y(x,t)$ \eqref{fig4a} and  $ p(x,t) $ \eqref{fig4b} in $t=0.2$s, $t=0.4$s, $t=0.6$s, $t=0.8$s with $ \gamma =10^{-8}, \mu=0.6 $ in Example \ref{ex7.2}.}\label{fig4}
\end{figure}
\clearpage
\begin{figure}[h!]
\centering
\subfloat{\includegraphics[scale=.2]{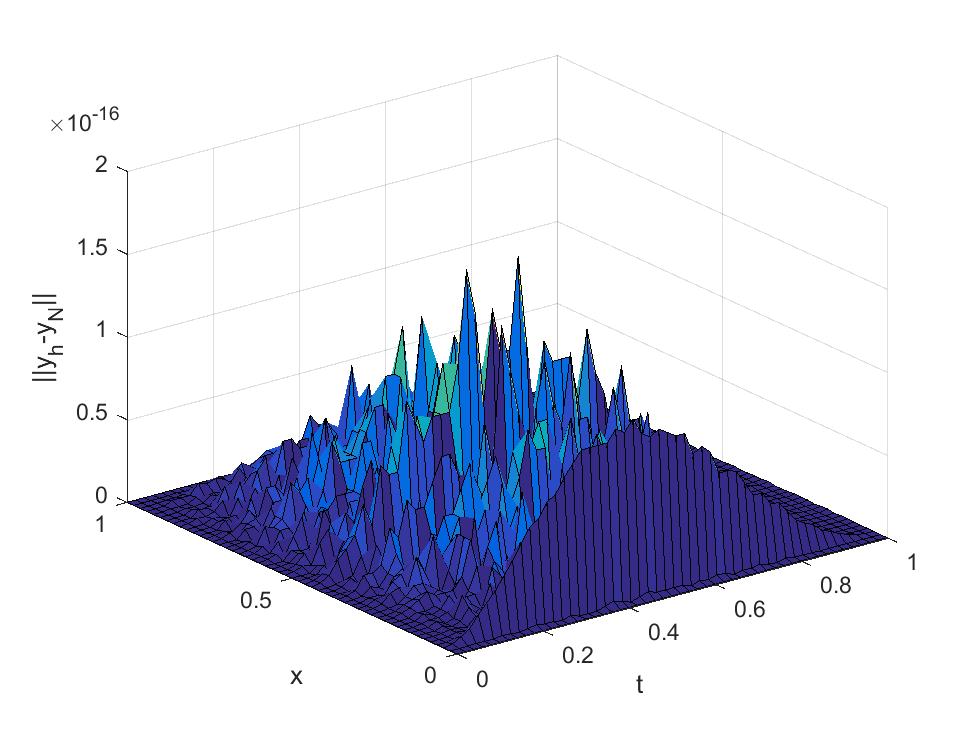}}
\subfloat{\includegraphics[scale=.2]{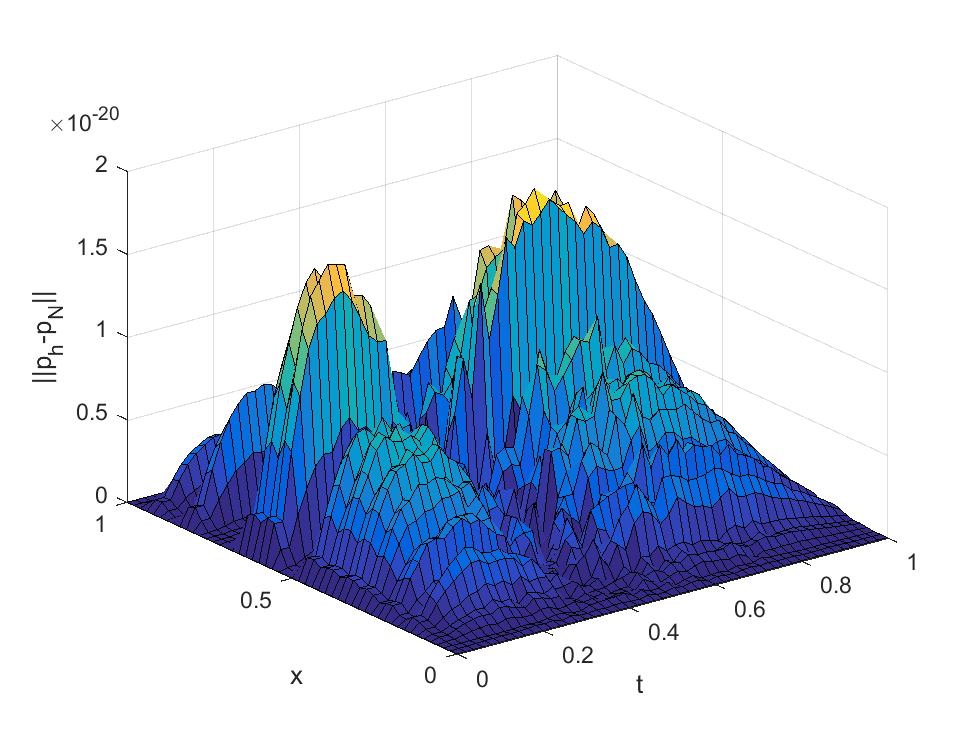}}
\caption{\footnotesize{Plots of $ \|y_h-y_N\| _2$ and $ \|p_h-p_N\|_2 $  with $ \gamma =10^{-8}, \mu=0.6 $ in Example \ref{ex7.2}.}}\label{fig5}
\end{figure}
In this example, the number of chosen parameter is one, so we did not plot the error bound.
\end{example}

\begin{example}\label{ex7.3}
In the problem $ (P) $, we take $T=1$, $x\in [0,1]$, $\alpha=0.7$ and
\begin{align*}
&y_d=\gamma((16\pi ^4t^3(t-1)^3-3t^3(2t-2)-18t^2(t-1)^2-6t(t-1)^3)\cos(2\pi x))+\\
&\gamma(3t^3(2t-2)+18t^2(t-1)^2+6t(t-1)^3)+t^3(1-t)^3(1-\cos(2\pi x)).
\end{align*}
The graphs of finite element  and reduced solutions of $ y(x,t) $ and $ p(x,t) $ for $ t=0.2, 0.3, 0.5, 0.8 $ with $ N=5$ and $ \gamma =10^{-7} $ are plotted in Fig. \ref{fig6}. In Fig. \ref{fig7}, the error functions $ \|y_h-y_N\| $ and $ \|p_h-p_N\| $ with $ \gamma =10^{-7} $ plotted.
\begin{figure}[h!]
\centering
\subfloat[\label{fig6a}]{\includegraphics[scale=.2]{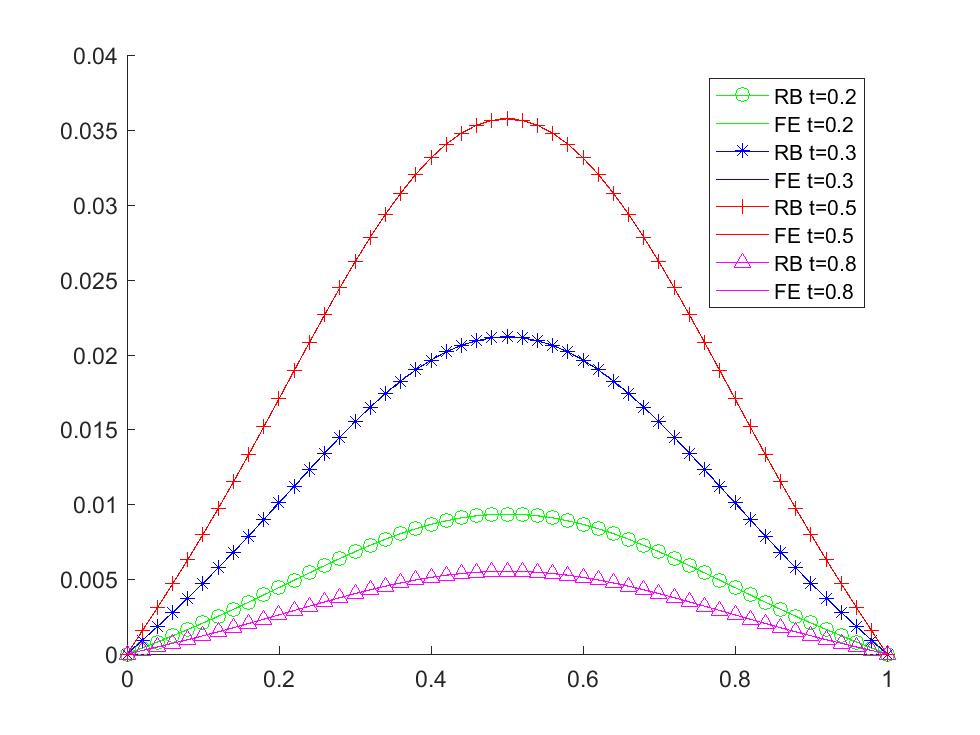}}
\subfloat[\label{fig6b}]{\includegraphics[scale=.2]{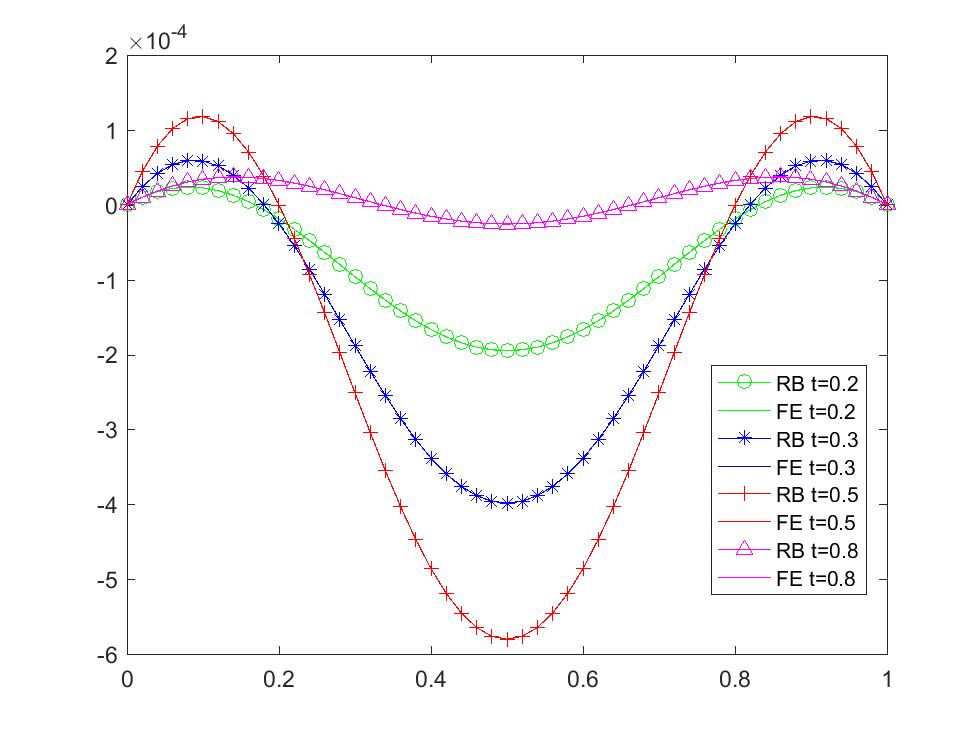}}
\caption{Comparisons between finite element and reduced  solutions of $y(x,t)$ \ref{fig4a} and  $ p(x,t) $ \ref{fig4b} in $t=0.2$s, $t=0.3$s, $t=0.5$s, $t=0.8$s with $ \gamma =10^{-3}, \mu=0.45 $ in Example \ref{ex7.3}.}\label{fig6}

\end{figure}
\begin{figure}[h!]
\centering
\subfloat{\includegraphics[scale=.2]{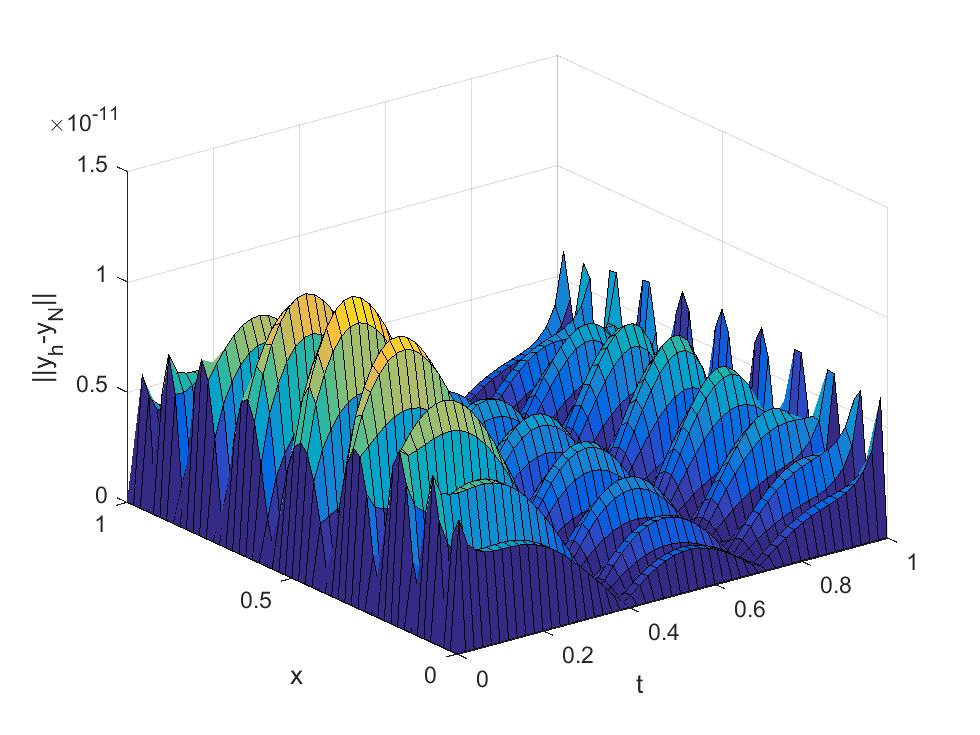}}
\subfloat{\includegraphics[scale=.2]{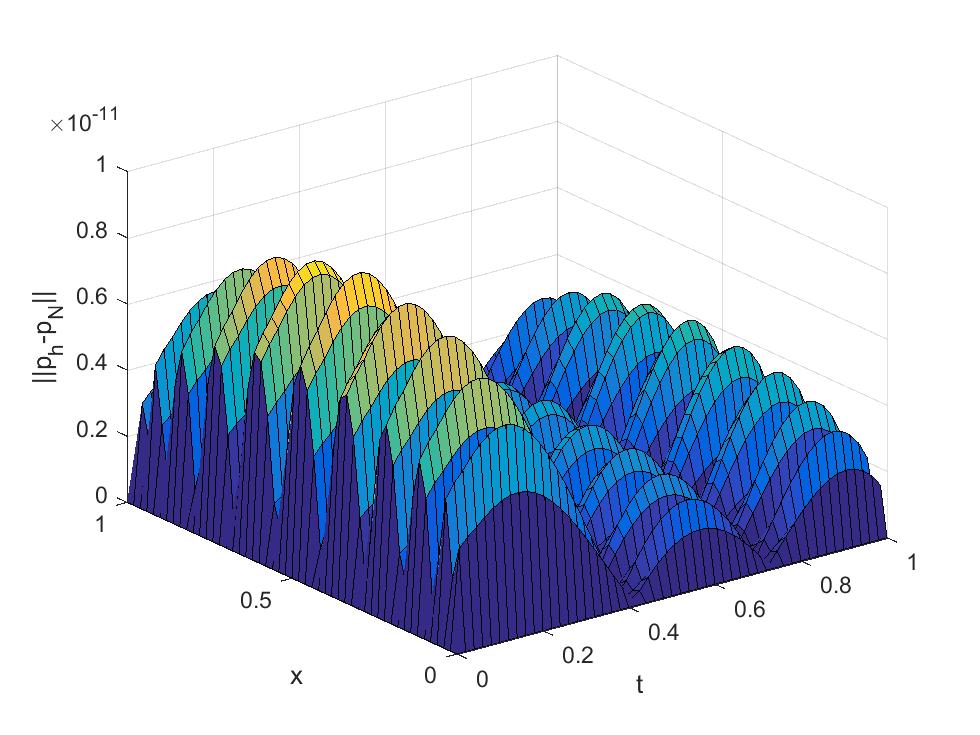}}
\caption{\footnotesize{Plots of $ \|y_h-y_N\|_2 $ and $ \|p_h-p_N\|_2 $  with $ \gamma=10^{-3} $ in Example \ref{ex7.3}.}}\label{fig7}
\end{figure}
See Fig. \ref{fig8} for the error bound of the reduced basis method for the state variable and adjoint variable. This method choose 8 parameters to construct the state space and adjoint space.

\begin{figure}[h!]
\centering
\subfloat[\label{fig8a}]{\includegraphics[scale=.2]{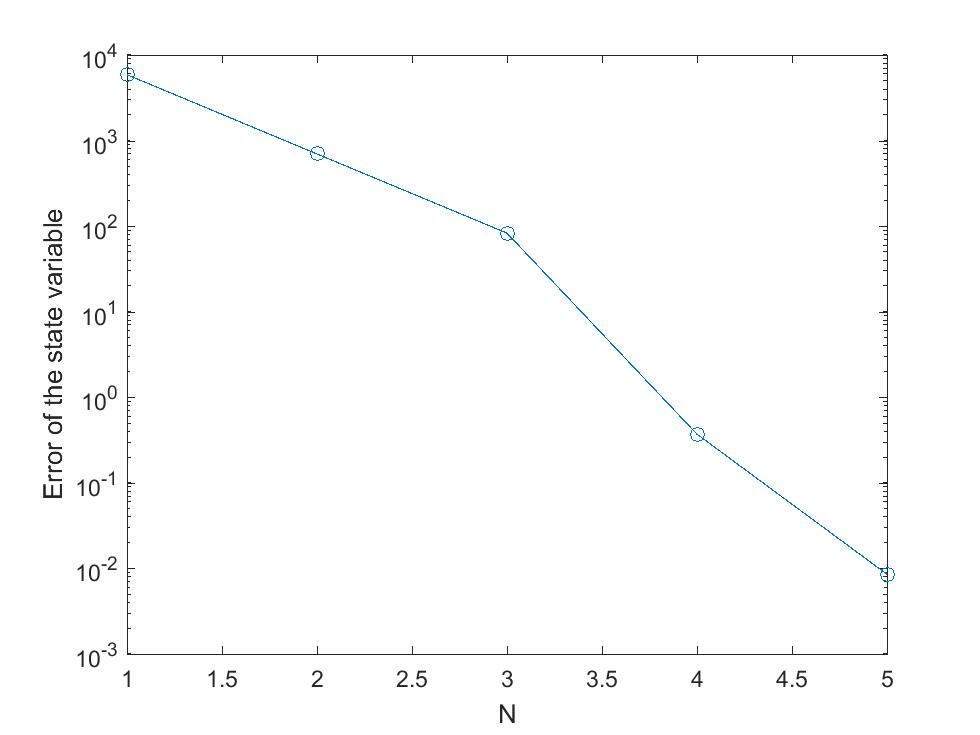}}
\subfloat[\label{fig8b}]{\includegraphics[scale=.2]{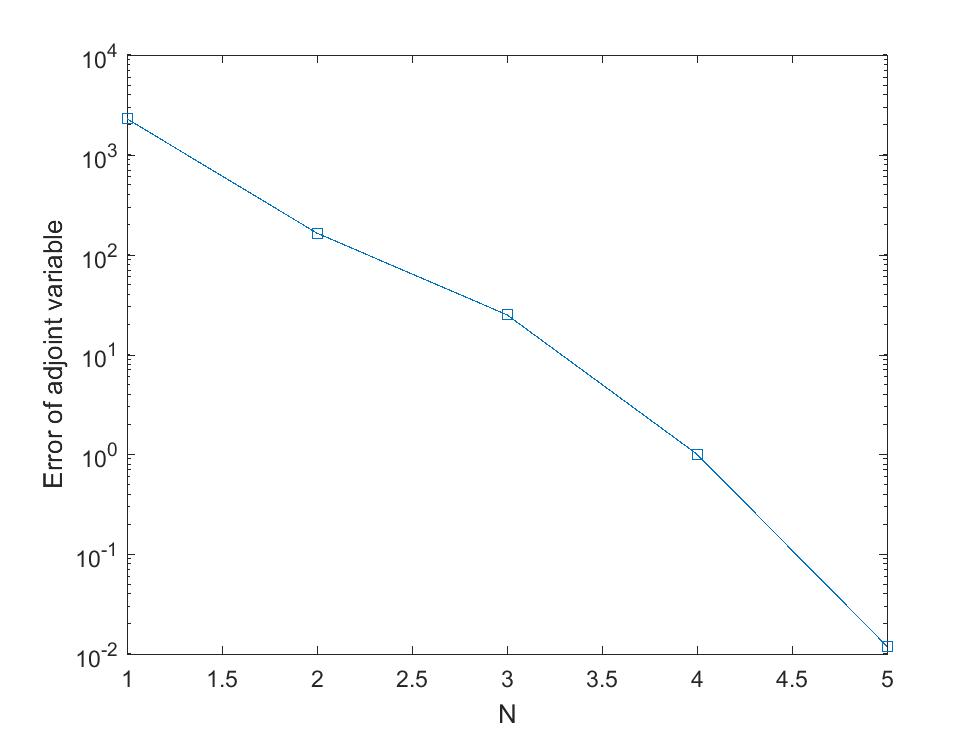}}
\caption{\footnotesize{The error bound for the state variable \eqref{fig8a} and adjoint variable  \eqref{fig8b}. }}\label{fig8}
\end{figure}
\end{example}

%\clearpage
\section{Conclusion}
In this paper, we use parametric model order reduction using reduced basis methods as an effective tool for obtaining a quick solution of fractional PDE constrained optimization problem. The used technique is applied to solve three test problems and the resulting solutions are in good agreement with the known exact solutions. For the sake of simplicity, we only considered the one-dimensional case with standard initial and boundary conditions, but the method can be extended to multi-dimensional cases with even non-classic boundary conditions which is the subject of the authors. The accuracy of numerical solution by this method is much higher than the classical numerical solutions. Numerical solutions are obtained efficiently and the stability is maintained for randomly perturbed data.

%\bibliographystyle{apacite}
%\bibliography{interactapasample}

%\citep{1,2,3,4,5,6,7,8,9,10,11,12,13,14,15,16,17,18,19,20,21,22,23,24}

\end{document}